\newcommand\reallywidehat[1]{%
\savestack{\tmpbox}{\stretchto{%
  \scaleto{%
    \scalerel*[\widthof{\ensuremath{#1}}]{\kern.1pt\mathchar"0362\kern.1pt}%
    {\rule{0ex}{\textheight}}
  }{\textheight}%
}{2.4ex}}%
\stackon[-6.9pt]{#1}{\tmpbox}%
}
\numberwithin{equation}{section}
\newtheorem{thm}{Theorem}[section]
\newtheorem{lemm}[thm]{Lemma}
\newtheorem{prop}[thm]{Proposition}
\newtheorem{rem}[thm]{Remark}
\newtheorem{defn}[thm]{Definition}
\newcommand{\bke}[1]{\left ( #1 \right )}
\newcommand{\norm}[1]{ \| #1  \|}
\newcommand\al{\alpha}
\newcommand\be{\beta}
\newcommand\ga{\gamma}
\newcommand\ep{\epsilon}
\renewcommand\th{\theta}
\newcommand\la{\lambda}
\newcommand\De{\Delta}
\newcommand\La{\Lambda}
\newcommand{\R}{\mathbb{R}}
\newcommand{\Z}{\mathbb{Z}}
\newcommand{\N}{\mathbb{N}}
\newcommand{\pa}{\partial}
\newcommand{\na}{\nabla}
\renewcommand{\bar}[1]{\overline{#1}}
\newcommand{\lec}{{\ \lesssim \ }}
\newcommand{\cR}{\mathcal{R}}
\newcommand{\EQ}[1]{\begin{equation}\begin{split} #1 \end{split}\end{equation}}
\newcommand{\EQN}[1]{\begin{equation*}\begin{split} #1 \end{split}\end{equation*}}
\newcommand{\simp}{\ \overset{\circ}{\approx}\ }
\newcommand{\T} { {\mathbb{T}^2} }
\newcommand{\len}{{\leq n}}
\providecommand{\myceil}[1]{\left \lceil #1 \right \rceil }
\newcommand{\smb}[1]{\scalebox{0.85}{#1}}
\newcommand{\bmt}[1]{\boxed{\operatorname{#1}}}
\title{Non-uniqueness of steady-state  weak solutions to \\ the surface quasi-geostrophic equations}
\author{Xinyu Cheng,  Hyunju Kwon, Dong Li}
\address{
\begin{minipage}{\linewidth}
Xinyu Cheng, University of British Columbia, \texttt{xycheng@math.ubc.ca}\\ \\
Hyunju Kwon, Institute for Advanced Study, \texttt{hkwon@math.ias.edu}\\  \\
Dong Li, Hong Kong University of Science and Technology, \texttt{madli@ust.hk} 
\end{minipage}
} 
\begin{document}

\maketitle
\begin{abstract}
We show the existence of nontrivial stationary weak solutions to the surface quasi-geostrophic
equations on the two dimensional periodic torus. 
\end{abstract}

\section{Introduction}
 Consider the 2D surface quasi-geostrophic (SQG) equations 
  for  $\th=\theta(x, t) : \T\times [0,\infty) \to \R$: 
\begin{equation}\label{sqg}
\begin{cases}
\pa_t \th + u \cdot \na \th = -\nu \La^{\ga} \th,   \qquad \text{in $\T \times(0,\infty)$;}     \tag{\text{SQG}}\\
u= \na^{\perp}\La^{-1}\th=(-\partial_2 \La^{-1} \th, \partial_1 \La^{-1} \th) =
(- \mathcal R_2 \theta,  \mathcal R_1 \theta ); \\
\th|_{t=0}= \th_0,
\end{cases}
\end{equation}
where $\nu\ge 0$ is the viscosity, $0<\gamma \le 2$
and $\mathbb T^2=[-\pi, \pi]^2$ is the periodic torus. For $s\ge 0$ the fractional Laplacian $\La^s=(-\Delta)^{\frac s2}$
is defined by (under suitable assumptions on $\theta$)
$
\widehat{ \La^s \theta} (k) = |k|^{s} \hat \theta (k)
$
for $k\in \mathbb Z^2$. 
For negative $s$ the formula is restricted to nonzero wave numbers.  We consider solutions with zero mean which is invariant under the dynamics thanks to incompressibility.   Note that the operators $\mathcal R_j$, $j=1,2$ are
skew-symmetric,
 i.e. $\langle  \mathcal R_j f, g \rangle = - \langle f ,  \mathcal R_j g \rangle $ where $\langle, \rangle$ denotes
the usual $L^2$ (real) inner product. Using this one can derive for $\theta \in L^2$
(below $[A,B]=AB-BA$ is the usual commutator):
\begin{align*}
\langle \theta  \mathcal R_j  \theta, \phi\rangle = -\frac 12 \langle \theta, [\mathcal  R_j, \phi] \theta\rangle,
\qquad \forall\, \phi \in C^{\infty}(\mathbb T^2).
\end{align*}
 By Proposition \ref{prop_smoot1} one has
$
\| [R_j, \phi ] \theta \|_{\dot H^{\frac 12} } \lesssim \| \phi \|_{H^3} \| \theta \|_{\dot H^{-\frac 12}}
$
and thus $\dot H^{-\frac 12}$ regularity suffices for defining a weak solution. To state our main theorem (Theorem \ref{thm}), 
we introduce a definition.
\begin{defn}
We say $\th\in \dot{H}^{-\frac 12}(\T)$ ( $\overline{ \theta}=0$) is 
a stationary weak solution  to \eqref{sqg} if 
\begin{align*}
\frac 12 \int_{\mathbb T^2}
(\Lambda^{-\frac 12 } \th)  \cdot \La^{\frac 12} ( [\mathcal R^{\perp}, \nabla \psi] \theta ) dx
= - \nu \int_{\mathbb T^2} (\La^{-\frac 12 } \theta) \La^{\gamma+\frac 12} \psi dx, \quad\, \forall\, \psi \in C^{\infty}(\mathbb T^2), 
\end{align*}
where $[\mathcal R^{\perp}, \nabla \psi] \theta = - [ \mathcal R_2, \partial_1 \psi] \theta + [\mathcal R_1, \partial_2 \psi] \theta$.

\end{defn}

\begin{thm}\label{thm}
For any $\nu\ge 0$, $\ga\in(0,\frac 32)$, and $\frac 12 \le  \al<\frac 12 +\min(\frac 1{6}, \frac 32-\ga)$, there exist infinitely many 
nontrivial steady-state/stationary weak solutions $\th$ to \eqref{sqg} satisfying  $\overline{\theta}\equiv 0$ and 
$\La^{-1}\th \in C^{\alpha}(\mathbb{T}^2)$. 
\end{thm}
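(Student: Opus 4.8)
The plan is to construct these stationary weak solutions by a convex integration scheme adapted to the stationary SQG equations, building on the by-now standard machinery for the Euler equations (De Lellis--Székelyhidi) and its refinements for SQG-type equations (Buckmaster--Shkoller--Vicol, Isett--Ma, and especially the "potential velocity" formulation of Isett--Looi and Cheng--Kwon--Li). The first step is to set up the iteration at the level of the stream function $\psi=\Lambda^{-1}\theta$: writing everything in terms of the scalar potential, the stationary SQG equation with dissipation becomes a single scalar relation, and one introduces a symmetric traceless "stress" tensor together with an error scalar so that the approximate equation reads schematically $\operatorname{div}$ of a stress plus the dissipative term equals the corrector. I would fix a geometric decomposition of the target ($-\nabla^\perp$ of the relevant quadratic expression in $\theta$) into a finite sum of "Mikado"-type building blocks that are stationary — here the crucial point, and the reason one can even hope for \emph{steady} solutions, is that one can use time-independent, highly oscillatory plane-wave packets whose self-interaction produces a prescribed low-frequency stress; this is the stationary analogue of the intermittent/Mikado flows, and no transport equation needs to be solved along characteristics.

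The core of the argument is the inductive proposition: there is a sequence $\theta_q$, $q\geq 0$, with frequency parameter $\lambda_q$ growing (say) super-exponentially and amplitude parameter $\delta_q\to 0$, solving the relaxed system with stress $\mathring R_q$ satisfying $\|\mathring R_q\|_{L^1}\lesssim \delta_{q+1}\lambda_q^{-\varepsilon}$ (or an $L^1$-based norm appropriate to the $\dot H^{-1/2}$ setting), together with the $C^\alpha$-type bounds $\|\Lambda^{-1}\theta_q\|_{C^{1}}\lesssim$ (a constant), $\|\theta_q\|_{\dot H^{-1/2}}\lesssim 1$, and $\|\Lambda^{-1}(\theta_{q+1}-\theta_q)\|_{C^\alpha}\lesssim \delta_{q+1}^{1/2}\lambda_{q+1}^{\alpha-\beta}$ for a small gain $\beta>0$. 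The perturbation $\theta_{q+1}-\theta_q$ is built as a sum of stationary oscillatory blocks $a_{\xi}(x)\,\Lambda^{1/2}$(something)$e^{i\lambda_{q+1}\xi\cdot x}$ whose amplitudes $a_\xi$ are chosen via the geometric lemma to cancel $\mathring R_q$ to leading order; one then estimates the new stress, which splits into the usual oscillation error, transport error (absent or trivial here since everything is stationary — this is the main simplification), Nash error, and a genuinely new \emph{dissipative} error coming from $\nu\Lambda^\gamma(\theta_{q+1}-\theta_q)$. The constraint $\gamma<3/2$ and $\alpha<\tfrac12+\min(\tfrac16,\tfrac32-\gamma)$ is exactly what one needs so that the dissipative error $\nu\lambda_{q+1}^{\gamma}\delta_{q+1}^{1/2}$ can be absorbed into $\delta_{q+2}$ while still keeping $\Lambda^{-1}\theta$ in $C^\alpha$; the $\min(\tfrac16,\cdot)$ term reflects the balance in the oscillation/Nash errors intrinsic to the SQG nonlinearity (which loses half a derivative relative to Euler, accounted for by the $\Lambda^{1/2}$ in the weak formulation and in Proposition \ref{prop_smoot1}).

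Concretely, the steps in order: (1) reformulate the stationary weak solution notion as a relaxed PDE for the pair $(\theta,\mathring R)$ and state the main inductive proposition with explicit parameters $\lambda_q=a^{(b^q)}$, $\delta_q=\lambda_q^{-2\beta_\ast}$; (2) prove a stationary geometric lemma producing amplitudes $a_\xi$ as smooth functions of $\mathring R_q$ on the region where $\mathring R_q$ is nonzero, using a partition of unity; (3) define the mollified flow $\theta_\ell$ (mollification in space only), define the principal perturbation $w_{q+1}^{(p)}$ from the building blocks, and add an incompressibility/mean-zero corrector $w_{q+1}^{(c)}$ so that $\theta_{q+1}$ has zero mean and the right cancellation; (4) verify the $C^\alpha$ and $\dot H^{-1/2}$ estimates on $w_{q+1}$ using the smoothing estimates from Proposition \ref{prop_smoot1} and stationary-phase/Littlewood--Paley bounds; (5) decompose the new stress into oscillation, Nash, commutator, mollification, and dissipative errors, estimate each in the chosen norm, and choose $\beta,\beta_\ast,b,a$ to close the induction; (6) pass to the limit $\theta=\lim\theta_q$ in $C^0_tC^{-1/2}$ (here just a fixed Banach space since there is no time), note $\Lambda^{-1}\theta\in C^\alpha$ by the telescoping sum, and verify it is a stationary weak solution since $\mathring R_q\to 0$; (7) get \emph{infinitely many} and \emph{nontrivial} solutions by starting the scheme from different nonzero first approximations $\theta_0$ (e.g. a single nonzero Fourier mode times an arbitrary small parameter, with a matching explicit $\mathring R_0$), and arrange that the $\dot H^{-1/2}$-norm at stage $0$ is essentially preserved so distinct $\theta_0$ give distinct limits. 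The main obstacle I expect is step (5)'s dissipative error together with the delicate derivative count in the oscillation error: SQG's nonlinearity is of the form $\theta\,\mathcal R\theta$, so passing a derivative onto the high-frequency building block and controlling the commutator $[\mathcal R^\perp,\nabla\psi]$ at the $\dot H^{1/2}$ level (Proposition \ref{prop_smoot1}) while simultaneously keeping $\Lambda^{-1}\theta$ merely $C^\alpha$ with $\alpha$ close to $1/2$ is a genuinely tight budget — it is precisely this tension that forces the ceiling $\tfrac12+\tfrac16$ on $\alpha$ when $\gamma$ is small, and getting the constants to line up is where the real work lies.
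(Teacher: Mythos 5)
Your overall setup---iterating at the level of the potential $f=\La^{-1}\th$, using time-independent high-frequency plane-wave packets, and closing an induction with oscillation, low-high, and dissipative errors---is in the right spirit and matches the paper's architecture in outline. But there is a genuine gap at the heart of the proposal: you assert that the self-interaction of stationary plane-wave/Mikado blocks ``produces a prescribed low-frequency stress,'' and your step (2) is the standard geometric lemma for a symmetric traceless tensor. For SQG this mechanism fails at leading order because of the odd multiplier obstruction: with $u=\mathcal R^\perp\th$ the multiplier is odd, so for a block $a_l(x)\cos(\la l\cdot x)$ the $O(\la^2)$ term of $\La f\,\nabla^\perp f$ is $\la f\nabla^\perp f=\nabla^\perp(\tfrac{\la}{2}f^2)$, which is annihilated by the $\simp$ equivalence and contributes nothing to the low-frequency budget. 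The paper's essential point is that the usable non-oscillatory contribution appears only at the next order, from the commutator $[\La,a_l]\cos(\la l\cdot x)$, yielding $-\tfrac14\la\sum_l (l\cdot\nabla)(a_l^2)\,l^\perp$; this is then matched to $\nabla q_n$ by a bespoke algebraic lemma, $\sum_{j=1}^2 l_j^\perp (l_j\cdot\nabla)(\cR_j^o Q)\simp \nabla Q$ with only two directions and Riesz-type multipliers $\cR_j^o$, which replaces the tensor geometric lemma you invoke. Correspondingly the relaxation variable is a \emph{scalar} $q_n$ (error $=\nabla q_n$ modulo $\nabla^\perp$), not a $2$-tensor, the norm is $L^\infty$-based rather than $L^1$-based, and the amplitude carries an extra factor $\la_{n+1}^{-1/2}$ (it is $\sim\sqrt{r_n/\la_{n+1}}$) precisely because the productive term is $O(\la)$ rather than $O(\la^2)$. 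None of this is recoverable from your sketch: without identifying where the non-oscillatory output comes from, steps (2) and (5) cannot be executed, and the parameter count you propose ($\delta_{q+1}^{1/2}$-sized perturbations) is off by the half power of $\la_{q+1}$.

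Two further points. First, the cross terms $\La f_{\len}\nabla^\perp f_{n+1}+\La f_{n+1}\nabla^\perp f_{\len}$ are not ``absent or trivial'' in the stationary setting; they form the transport error in the paper and are exactly what forces $\beta<\tfrac13$, hence the ceiling $\al<\tfrac12+\tfrac16$, so your attribution of the $\tfrac16$ to the oscillation/Nash balance is not quite right. Second, for nontriviality and multiplicity the paper does not vary $\th_0$ (the base step is $(0,0)$); it makes the solution almost explicit, with leading term $\sum_{n,j}2\sqrt{r_n/(5\la_{n+1})}\,(P_{\le\mu_{n+1}}\sqrt{C_0+\cR_j^o(q_n/r_n)})\cos(5\la_{n+1}l_j\cdot x)$, and nontriviality is read off by taking $C_0$ large; your alternative of seeding with distinct $\th_0$ could work but requires an argument that the limits remain distinct, which you only assert.
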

\begin{rem}
For the non-steady case,   by employing time-dependent test functions, 
 one can define weak solutions in  \scalebox{0.85}{$ L^2_{t,\operatorname{loc} } \dot H_x^{-\frac 12}$}.  Resnick \cite{Res95}  proved the global existence of a weak solution to \eqref{sqg} for $\nu\geq 0$ and 
 \scalebox{0.85}{$0< \ga\le 2$} in \scalebox{0.85}{$L_t^{\infty}L_x^2$} for any initial data 
 \scalebox{0.85}{$\th_0\in L^2(\T)$}. 
 Marchand \cite{Mar08} obtained a global  weak solution in \scalebox{0.85}{$L_t^{\infty} H^{-\frac 12}$}  for  \scalebox{0.85}{$\th_0\in \dot{H}^{-\frac 12}(\mathbb R^2)$} or \scalebox{0.85}{$L_t^{\infty} L^p$} for  \scalebox{0.85}{$\th_0 \in L^p_x(\mathbb R^2)$}, 
 \scalebox{0.85}{$p\ge \frac 43$}, when\footnote{For \smb{$\nu=0$} one requires \smb{$p> 4/3$}
 since the embedding \smb{$L^{\frac 43} \hookrightarrow \dot H^{-\frac 12}$} is not compact.
 For the diffusive case one has extra \smb{$L_t^2 \dot H^{\frac {\gamma}2-\frac 12}$} conservation
 by construction.    }
  \scalebox{0.85}{$\nu>0$} and \scalebox{0.85}{$0<\gamma\le 2$}. 
Non-uniqueness of weak solutions satisfying \scalebox{0.85}{$\| \La^{-\frac 12} \theta(t) \|_2 =e(t) $} (for any prescribed  
\scalebox{0.85}{$0\le e\in C_c^{\infty}$})
was obtained in \cite{BSV16} in two cases:
\scalebox{0.85} {$ 
\nu=0:\;  \frac 12<\beta<\frac 45, \, \sigma<\frac{\beta}{2-\beta}  ,\; \La^{-1}\theta \in C_t^{\sigma} C_x^{\beta}$} and
\scalebox{0.85}{$
 \nu>0:  \;\frac 12 <\beta<\frac 45, \, 0<\gamma<2-\beta, \,  \sigma<\frac{\beta}{2-\beta},
\;  \La^{-1}\theta \in C_t^{\sigma} C_x^{\beta}. 
$} Note that the restriction \scalebox{0.85}{$\beta-1 <1-\gamma$} accords with the critical 
\scalebox{0.85}{$\| \theta\|_{L_t^{\infty} \dot C^{1-\gamma}}$} norm. For \scalebox{0.85}{$\nu=0$} by using the identity
\scalebox{0.85}{$\frac 12 \partial_t ( \| \La^{-\frac 12} P_{<J} \th \|_2^2)
=-\int P_{<J} (\th \mathcal R^{\perp} \th) \cdot P_{<J} \mathcal R \th dx$}, one has conservation of
\scalebox{0.85}{$\|\La^{-\frac 12}
\th\|_2^2$} for \scalebox{0.85}{$\th \in L_{t,x}^3$} (see also \cite{IsVi15}). 
We also mention that for the non-dissipative case in the positive direction some uniqueness
of SQG patches with some regularity for the moving boundary were obtained
in recent \cite{CCG18}.
\end{rem}
\begin{rem}
Consider a general active scalar \scalebox{0.85}{$\partial_t \theta+\nabla \cdot ( \theta u)=0$} where 
\smb{$\widehat{u}=m(k) \widehat{\theta}(k)$}.  By using a plane wave ansatz \smb{$\theta = a_k e^{i \la k \cdot x}
+a_{k}^{*} e^{-i \la k\cdot x}$} with  \smb{$|k|=1$ and $\la\gg 1$}, we derive  
\smb{$ \nabla \cdot ( \theta u) 
\underset{\text{low freq}}
\approx \nabla \cdot ( |a_k|^2 ( m(-\lambda k) +m(\lambda k) ) ) $ } which vanishes 
if $m$ is odd. This is known as the odd multiplier obstruction \cite{DeSz12, Shv11, IsVi15} in applying the convex integration scheme
\cite{DeSz19}.  Previously the non-uniqueness results were established only for active scalar equations with non-odd multipliers \cite{Shv11,IsVi15}. 
In \cite{BSV16} this issue was resolved by using the momentum equation for $v= \La^{-1} u$  and rewriting the nonlinearity
\smb{$ u\cdot \nabla v - (\nabla v)^{T} \cdot u$} as the sum of a divergence of a $2$-tensor, and a gradient of a scalar function.  Recently Isett and Ma
\cite{IsMa20} gives another direct approach at the level of $\theta$.

\end{rem}
\begin{rem}
The restriction\footnote{This restriction is immaterial in some sense since $\gamma>1$ is
subcritical.}
 $\gamma<\frac 32$ in Theorem \ref{thm} can be seen by a crude heuristic using
the plane wave ansatz localized around frequency $\lambda$. The domination of 
nonlinearity versus dissipation yields $\| \Lambda^{-1} \theta\|_{\infty} \gg \lambda^{\gamma-2}$. 
H\"older regularity of $\Lambda^{-1} \theta$ yields $\|\Lambda^{-1}\theta\|_{\infty}
\lesssim \lambda^{-\alpha}$ where $\alpha>\frac 12$.  Thus $\gamma\le 2-\alpha<\frac 32$.
\end{rem}
 
The modest goal of  this work  is to introduce another approach to overcome the odd multiplier obstruction by working directly with the scalar function $f= \La^{-1}\th$.  Returning to the plane
wave ansatz, the SQG nonlinearity written for $f$ is $Q^{\nabla} (\La f \nabla^{\perp} f)$ where $Q^{\nabla}$ means projection to the gradient
direction.\footnote{We caution the reader that this is slightly different than the usual convex integration scheme in Euler.}  
Now consider $f= \sum a_l(x) \cos (\lambda l\cdot x)$ where $|l|=1$ and $\lambda \gg 1$,  then (see Lemma \ref{Leib})
\[
\La f = \la f + (l\cdot \na) a \sin(\la l\cdot x) + (T^{(1)}_{\la l}a) \cos (\la l\cdot x) + (T^{(2)}_{\la l}a)\sin (\la l\cdot x).
\]
Thus we have
\begin{align*}
\La f \nabla^{\perp} f \simp  - \frac 14\la \sum_{l} (l\cdot \na)(a_l^2) l^{\perp} + \text{error terms}.
\end{align*}
We then use a novel algebraic lemma (Lemma \ref{decomp.id}) to obtain nontrivial projection in the gradient direction. One should note  that in the
above computation,  the leading $O(\lambda^2)$ term vanishes which completely accords with the odd multiplier obstruction problem
mentioned earlier. 
What is remarkable is  that in the next $O(\lambda)$ term there is nontrivial non-oscillatory contribution coming from the commutator piece
$[\La, a_l] \cos \lambda x$. This resonates with the momentum approach in \cite{BSV16} and also the recent work
\cite{IsMa20}.

Our next result is about the weak rigidity of solutions in the time-dependent case. It improves
Theorem 1.3 of \cite{IsVi15} from $L_t^p L_x^2$, $p>2$ to $L_t^2 \dot H^{-\frac 12+}$.
 The proof can be found in Appendix B. 
\begin{thm}[Weak rigidity] \label{thm2}
Let $\nu\ge 0$ and $0<\gamma\le 2$. 
Suppose $f = \lim_n \theta_n$ is a weak limit of solutions \eqref{sqg} in $L_t^2 \dot H^s$ for $s>-\frac 12$. Then $f$ must also be a weak solution.
\end{thm}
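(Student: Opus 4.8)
The plan is to verify that the limit $f$ satisfies the weak formulation of \eqref{sqg} with space--time test functions $\psi\in C^\infty_c(\T\times I)$ — the time-dependent version of the stationary formulation, cf.\ the first remark after Theorem \ref{thm} — by passing to the limit in every term along $\theta_n\rightharpoonup f$ in $L^2_t\dot H^s$. For a fixed $\psi$, every term is linear in $\theta$ except the nonlinearity $\mathcal N_\psi(\theta):=\tfrac12\int_I\int_\T(\La^{-1/2}\theta)\,\La^{1/2}\big([\mathcal R^\perp,\na\psi]\theta\big)\,dx\,dt$. The linear terms pose no difficulty: $\La^{-1/2}\theta_n\rightharpoonup\La^{-1/2}f$ in $L^2_t\dot H^{s+1/2}$, and they are paired against the smooth — hence $L^2_t\dot H^{-s-1/2}$ — functions $\La^{1/2}\pa_t\psi$ and $\nu\La^{\ga+1/2}\psi$, so they converge. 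Thus everything reduces to proving $\mathcal N_\psi(\theta_n)\to\mathcal N_\psi(f)$, and I would establish this by upgrading the weak convergence to strong convergence in a slightly weaker topology through an Aubin--Lions compactness argument.

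First I would collect the uniform bounds. Weak convergence gives $M:=\sup_n\|\theta_n\|_{L^2_t\dot H^s}<\infty$. To obtain time regularity I would test the equation against $\phi\in C^\infty(\T)$, so that $\tfrac{d}{dt}\langle\theta_n,\phi\rangle_{L^2_x}=-\tfrac12\langle\theta_n,[\mathcal R^\perp,\na\phi]\theta_n\rangle-\nu\langle\theta_n,\La^\ga\phi\rangle$, and then bound the right-hand side using the commutator smoothing of Proposition \ref{prop_smoot1}. The relevant form is the ``gain of one derivative'' bound $\|[\mathcal R_j,\phi]g\|_{\dot H^{-s}}\lec\|\phi\|_{H^N}\|g\|_{\dot H^{-s-1}}$ (valid for smooth $\phi$, since $[\mathcal R_j,\phi]$ is a pseudodifferential operator of order $-1$), which together with the torus embedding $\dot H^s\hookrightarrow\dot H^{-s-1}$ — valid precisely because $s\ge-\tfrac12$ — yields $|\tfrac{d}{dt}\langle\theta_n(t),\phi\rangle_{L^2_x}|\lec\|\phi\|_{H^N}\big(\|\theta_n(t)\|_{\dot H^s}^2+\nu\|\theta_n(t)\|_{\dot H^s}\big)$ for some fixed $N$ (depending on $\ga\le2$). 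Integrating in $t$ gives a uniform bound $\|\pa_t\theta_n\|_{L^1_tH^{-N}}\lec M^2+M|I|^{1/2}$.

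Next, fixing $\ve\in(0,1)$ with $s-\ve>-\tfrac12$, the chain $\dot H^s\hookrightarrow\hookrightarrow\dot H^{s-\ve}\hookrightarrow H^{-N}$ on $\T$ lets the Aubin--Lions--Simon lemma conclude that $\{\theta_n\}$ is relatively compact in $L^2_t\dot H^{s-\ve}$; since the only possible limit point is $f$, the whole sequence satisfies $\theta_n\to f$ strongly in $L^2_t\dot H^{s-\ve}$. To finish I would split $\mathcal N_\psi(\theta_n)-\mathcal N_\psi(f)=\tfrac12\langle\theta_n-f,[\mathcal R^\perp,\na\psi]\theta_n\rangle+\tfrac12\langle f,[\mathcal R^\perp,\na\psi](\theta_n-f)\rangle$, the brackets denoting the $L^2(I\times\T)$ pairing. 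In the first term $[\mathcal R^\perp,\na\psi]\theta_n$ is bounded in $L^2_t\dot H^{s+1}$, so by duality this term is $\lec M\|\theta_n-f\|_{L^2_t\dot H^{-s-1}}\lec M\|\theta_n-f\|_{L^2_t\dot H^{s-\ve}}\to0$, using $\dot H^{s-\ve}\hookrightarrow\dot H^{-s-1}$ (which holds for $\ve$ small since $-s-1\le s-\ve$). In the second term, $[\mathcal R^\perp,\na\psi]$ is self-adjoint (each $\mathcal R_j$ is skew-symmetric and multiplication by $\na\psi$ is self-adjoint), so it equals $\tfrac12\langle[\mathcal R^\perp,\na\psi]f,\theta_n-f\rangle$ with $[\mathcal R^\perp,\na\psi]f\in L^2_t\dot H^{s+1}\subset L^2_t\dot H^{-s}$ — again using $s\ge-\tfrac12$ — a genuine element of the dual of $L^2_t\dot H^s$, so this term tends to $0$ by weak convergence. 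Hence $\mathcal N_\psi(\theta_n)\to\mathcal N_\psi(f)$, and $f$ is a weak solution.

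The genuine obstacle is the quadratic nature of the nonlinearity, which rules out passing to the limit by weak convergence alone; the crux is therefore producing real space--time compactness, and the quantitative engine everywhere is the one-derivative smoothing of $[\mathcal R^\perp,\na\psi]$ — it makes the bilinear form bounded on $L^2_t\dot H^s$, it produces the $\pa_t$ bound that feeds Aubin--Lions, and it supplies the spare derivative traded against the Rellich loss $\ve$. The strict inequality $s>-\tfrac12$ is indispensable rather than cosmetic: at $s=-\tfrac12$ there is no room for the compactness gain — consistently, non-uniqueness holds at that regularity by Theorem \ref{thm} — so the only real care required is the elementary bookkeeping that keeps $\ve<s+\tfrac12$ throughout.
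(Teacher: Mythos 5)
Your argument is correct and follows essentially the same route as the paper's proof: extract a uniform bound on $\partial_t\theta_n$ in a very negative Sobolev space from the weak formulation together with the commutator smoothing of Proposition \ref{prop_smoot1}, upgrade the weak convergence to strong convergence in a topology between $\dot H^{-\frac12}$ and $\dot H^s$, and then close the quadratic term using the one-derivative gain of $[\mathcal R^\perp,\nabla\psi]$. The only substantive difference is the compactness device: the paper argues Fourier mode by mode (for each fixed $k$, $\|\partial_t\widehat{\theta_n}(k,\cdot)\|_{L^1_t}\lesssim|k|^{8}$ and $\|\widehat{\theta_n}(k,\cdot)\|_{L^2_t}\lesssim|k|^{-s}$ give $L^\infty_t\cap BV_t$ bounds, hence $L^2_t$ convergence of each mode by Helly plus dominated convergence, and the uniform $L^2_t\dot H^s$ bound with $s>-\tfrac12$ controls the high-frequency tail of the $\dot H^{-\frac12}$ norm), and then uses strong convergence in both slots of the bilinear form rather than your strong/weak splitting (your use of the self-adjointness of $[\mathcal R_j,\phi]$ is valid but not needed in the paper's version). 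One technical caveat in your compactness step: the Aubin--Lions--Simon lemma with $\partial_t\theta_n$ bounded only in $L^1_tH^{-N}$ and $\theta_n$ bounded in $L^p_t\dot H^s$ gives relative compactness in $L^q_t$ of the intermediate space only for $q<p=2$, whereas your duality estimate for the first term of the splitting needs the endpoint $q=2$. The conclusion is still true here, but it requires the extra observation that $\theta_n$ is uniformly bounded in $L^\infty_tH^{-N}$ (e.g.\ from $\|\theta_n(t)\|_{H^{-N}}\le\tfrac1{|I|}\int_I\|\theta_n\|_{H^{-N}}\,d\tau+\int_I\|\partial_t\theta_n\|_{H^{-N}}\,d\tau\lesssim M+M^2$), after which a.e.-in-$t$ convergence, dominated convergence, and interpolation against the $L^2_t\dot H^s$ bound yield strong convergence in $L^2_t\dot H^{s-\ve}$; this endpoint issue is precisely what the paper's per-mode argument sidesteps, since a scalar function bounded in $L^2_t$ with derivative in $L^1_t$ is automatically bounded in $L^\infty_t$. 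With that repair your proof is complete.
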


\subsection*{Notations}
For any two quantities \smb{$A$} and \smb{$B$}, \smb{$A\lesssim B$} denotes 
\smb{$A\leq CB$} for some absolute constant \smb{$C>0$}. Similarly, 
\smb{$A\gtrsim B$} means \smb{$A \geq CB$}, and \smb{$A\sim B$} when 
\smb{$A\lesssim B$} and \smb{$A\gtrsim B$}. 
For a real number \smb{$X$}, we use \smb{$X^+$} for \smb{$X+\ep$} when $\ep>0$ is sufficiently small. For any two vector functions $v$ and $w$, we denote
\begin{center}
\boxed{v\simp w, \quad \text{if}\quad v=w+\nabla^{\perp} p}
\end{center}
holds for some smooth scalar function $p$. The mean of  $f$ on $\T$ is denoted by $\overline{f} =  \frac{1}{(2\pi)^2}\int_{\mathbb{T}^2} f(x) dx$. The function space 
\smb{$C_0^\infty(\T)$} is the collection of all \smb{$C^{\infty}$} mean-zero functions on $\T$.
For any 
\smb{$1\leq p\leq \infty$}, we denote \smb{$\| f\|_p=\norm{f}_{L^p(\T)}$} as the usual Lebesgue norm. 
 For $f$ on $\T$, we follow the Fourier transform convention
\smb{$
\hat{f}(k) = \frac 1{(2\pi)^2} \int_{\T} f(x) e^{-ix\cdot k} dx$} and \smb{$
f(x) = \sum_{k\in \Z^2} \hat{f}(k) e^{ik\cdot x}.
$}
The convolution operation $*$ is defined by
\smb{$
(f* g)(x) = \frac 1{(2\pi)^2} \int_{\T} f(x-y)g(y) dy,
$}
which implies 
\smb{$
\widehat{f*g}(k) = \hat{f}(k)\hat{g}(k)$} and \smb{$
\widehat{fg}(k) = \sum_{l\in \Z^2}\hat{f}(l)\hat{g}(k-l)$. } 

For $s\in \mathbb R$, the homogeneous \smb{$\dot H^s$} 
Sobolev norm is defined by
\smb{$
\norm{f}_{\dot{H}^s(\T)} = \left(\sum_{0 \ne k \in \mathbb Z^2} |k|^{2s}|\hat{f}(k)|^2 \right)^{\frac 12}.
$ }

\section{Proof of Theorem \ref{thm} }
Note that for  $f= \La^{-1} \th \in C_0^{\infty}(\T)$ the steady-state SQG equation is 
$\na\cdot\left(\La{f}\na^{\perp}{f}\right)=-\nu\La^{\ga+1}{f}$ 
which follows from 
$
\La {f}\na^{\perp}{f}
\simp \nu\La^{\ga-1}\na{f}.
$ The idea is to find approximate solutions
\smb{$(f_\len,q_n)\in C_0^\infty(\T)\times C_0^\infty(\T)$} solving the relaxed equation
\EQ{\label{eq.app}
\La f_\len\na^{\perp}f_\len\simp\nu\La^{\ga-1}\na f_\len+\na q_n,}
such that $q_n \to 0$ in the limit.  This will be done inductively. 
\begin{proof}[Proof of Theorem \ref{thm}]
WLOG we take $C_0=2$ in Proposition \ref{matching}. 
Fix $\nu\ge 0$, $0< \ga<\frac 32$ and choose parameters as in \eqref{par}. Choose $b$ and $\la_0$ as in Proposition \ref{const.qn}. If necessary, we choose larger $\la_0$ to have $\sum_{m=0}^\infty\la_m^{\alpha-\frac 12-\frac{\beta}{2b}}\le 1$.  Take the base step $(f_{\le 0}, q_0)=(0,0)$. For $n^{\operatorname{th}}$-step, assume that
 $(f_\len, q_n) \in C_0^\infty(\T)\times C_0^\infty(\T)$ satisfy
\begin{itemize}
\item $(f_\len, q_n)$ solves \eqref{eq.app}.
    \item $\operatorname{supp}(\widehat{f_{\len}}) \subset\{|k| \le 6 \la_n\}$,
    $\operatorname{supp}(\widehat{q_{n}}) \subset\{|k| \le 12 \la_n\}$  and satisfies
    $\norm{q_n}_X\le r_n$ (see \eqref{X_def}), 
    \EQN{\norm{f_\len}_{C^\al(\T)}\leq 50
    \text{\smb{$\sum_{m=1}^n\la_m^{\al}\sqrt{\frac{r_{m-1}}{\la_m}}$}}\leq100 \sum_{m=0}^{n-1} \la_{m+1}^{\alpha-\frac 12-\frac{\beta}{2b} } \leq 100.
}
\end{itemize}
Then for step $n+1$, we  find $f_{n+1} = f_{\le n+1}-f_\len$ and $q_{n+1}\in
C_0^{\infty}(\T)$ satisfying
\begin{itemize}
    \item $f_{n+1}$ is chosen to  solve $\La f_{n+1}\na^{\perp}f_{n+1}+\na q_n \simp 0$  up
to a sufficiently small correction (see Proposition \ref{matching}).  Also
\smb{$\operatorname{supp}(\widehat{f_{\le n+1}}) \subset\{|k| \le 6 \la_{n+1}\}$}
and \smb{$\|f_{n+1} \|_{C^{\alpha}(\mathbb T^2)}
\le 100 \lambda_{n+1}^{\alpha}\cdot 
\sqrt{\frac{r_n }{\lambda_{n+1}} }$}.
    \item $\|q_{n+1}\|_X \le r_{n+1}$,  $\operatorname{supp}(\widehat{q_{n+1}}) 
    \subset\{|k| \le 12 \la_{n+1}\}$  and solves
         (see Proposition \ref{const.qn}) 
\EQ{\label{eq.pert}
(\La f_{n+1}\na^{\perp}f_{n+1}+\na q_n)
+\La f_\len\na^{\perp}f_{n+1}&+\La f_{n+1}\na^{\perp}f_\len
\simp\na q_{n+1}+\nu\La^{\ga-1}\na f_{n+1}.
}
\end{itemize}
Thus with the help of Proposition \ref{matching} and \ref{const.qn} the induction step can be closed and it remains to show that $f_{\le n}$
converges to the desired weak solution. We first check its regularity. Clearly 
\begin{align*}
\norm{f_{\le n'}-f_\len}_{C^\alpha}
\lesssim \sum_{m=n}^{n'-1} \la_{m+1}^{\alpha-\frac 12-\frac{\beta}{2b} },  \quad \forall\, n'\geq n.
\end{align*}
Thus $f_\len \to f\in C^{\al}(\T)$.  Now denote $\theta_n =\Lambda f_n$ and
$\theta=\Lambda f$.
Clearly 
\begin{align*}
\langle \theta_n \Lambda^{-1} \nabla^{\perp} \theta_n
-\nu \Lambda^{\gamma-2} \nabla \theta_{n+1} -\nabla q_{n+1}, \nabla \psi \rangle =0,
\quad \forall\, \psi \in C^{\infty}(\mathbb T^2).
\end{align*}
We then rewrite the above as
\begin{align*}
\frac 12 \langle \Lambda^{-\frac 12} \theta_n, \Lambda^{\frac 12}[\mathcal R^{\perp},\nabla
\psi]\theta_n \rangle+ \nu \langle {\La}^{-\frac 12} \theta_n, \Lambda^{\gamma+\frac 12} \psi\rangle
+ \langle q_n, \Delta \psi \rangle =0, \quad\forall\, \psi \in C^{\infty}(\mathbb T^2).
\end{align*}
Since $\Lambda^{-\frac 12}\theta_n \to \Lambda^{-\frac 12} \theta
$ strongly in $L^{\infty}$, by using Proposition \ref{prop_smoot1}, we obtain that
$\th$ solves \eqref{sqg}. 

Finally we remark that our solution $\th=\La f$ has an almost explicit form. By using 
\eqref{defn.f_new}, we have
\begin{align*}
f= \sum_{n=0}^{\infty} \sum_{j=1}^2
2 \sqrt{\frac{r_n}{5\lambda_{n+1} } } \left(P_{\le \mu_{n+1}}
\sqrt{C_0+R_j^o \frac{q_n}{r_n} }\right) \cos (5\lambda_{n+1} l_j\cdot x).
\end{align*}
The leading term is an almost explicit Fourier series (one can take $C_0$ large) and thus
our solution is nontrivial.

\end{proof}
\subsection*{Parameters}\label{para}
Throughout this paper, we fix parameters as follows. \smb{$\nu\ge 0$, $0<\gamma <\frac 32$,
$0<\beta<\min\{\frac 13, 3-2\gamma\}$},
\begin{equation}\label{par}
\la_n=\myceil{\la_0^{b^n}},\quad r_n=\la_n^{-\beta}, 
 \quad \mu_{n+1}= (\lambda_{n+1} \lambda_n)^{\frac 12}, \qquad n\in \N \cup \{0\},
\end{equation}
where $\myceil{\cdot }$ denotes the ceiling function. 
Here $\la_0\in \N$, $b=1^+$, will be chosen in Proposition \ref{const.qn}.
 The H\"{o}lder exponent in Theorem \ref{thm} is
\smb{$\al = \frac 12 + \frac {\beta}{2b} -\epsilon_0>\frac12$} by taking  first $b-1$ sufficiently small and then $\epsilon_0$ sufficiently small.  See also Appendix \ref{S:appC} for more explicit dependence of constants.

\section{Construction of $f_{n+1}$ }\label{sec3}
In this section we show that for given $q_n$, one can solve
the main piece in \eqref{eq.pert} up to a small error:
\begin{align}
\label{main.eq}
\La f_{n+1}\na^\perp f_{n+1}  +\na q_n \simp \text{small error.}
\end{align}

\subsection{Derivation of the leading order part}
Consider the ansatz ($f=f_{n+1}$)
\EQ{\label{ansatz}
f(x) = \sum_{l} a_l(x) \cos(\la l \cdot x), 
}
where the frequency $a_l$ is much smaller than $\la$ and the summation over $l$ is finite. 
%

\begin{lemm}[Leibniz]\label{Leib} Let $|l|=1$, $\la l \in \Z^2$, and $g(x) = a(x)\cos(\la l\cdot x)$. Then,
\[
\La g = \la g + (l\cdot \na a) \sin(\la l\cdot x) + (T^{(1)}_{\la l}a) \cos (\la l\cdot x) + (T^{(2)}_{\la l}a)\sin (\la l\cdot x),
\]
where 
\begin{align} \label{operator.T}
\widehat{T^{(1)}_{\la l}a}(k) =  \left(\frac{\left| \la l + k \right| + \left| \la l-k \right|}{2}-\la \right)\widehat{a}(k),\quad
\widehat{T^{(2)}_{\la l}a}(k) = i  \left(\frac{\left| \la l + k \right| - \left| \la l-k\right|}{2}-l\cdot k\right)\widehat{a}(k).
\end{align}
\end{lemm}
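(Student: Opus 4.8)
The plan is to verify the stated identity directly on the Fourier side, working one frequency at a time. First I would expand $a(x)=\sum_{k\in\Z^2}\widehat a(k)e^{ik\cdot x}$ and write $\cos(\la l\cdot x)=\tfrac12\bke{e^{i\la l\cdot x}+e^{-i\la l\cdot x}}$, so that
\[
g(x)=\tfrac12\sum_{k}\widehat a(k)\bke{e^{i(k+\la l)\cdot x}+e^{i(k-\la l)\cdot x}}.
\]
Since $\La$ acts mode by mode, multiplying $e^{im\cdot x}$ by $|m|$ (the zero mode being killed, and in fact absent here because $\widehat a$ is supported at frequencies $\ll\la$, so $\widehat a(\pm\la l)=0$), this gives
\[
\La g(x)=\tfrac12\sum_{k}\widehat a(k)\bke{|k+\la l|\,e^{i(k+\la l)\cdot x}+|k-\la l|\,e^{i(k-\la l)\cdot x}}.
\]

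The key algebraic step is to symmetrize in the two shifts. Putting $S(k)=\tfrac12\bke{|k+\la l|+|k-\la l|}$ and $D(k)=\tfrac12\bke{|k+\la l|-|k-\la l|}$, so that $|k\pm\la l|=S(k)\pm D(k)$, and regrouping the $+$ and $-$ exponentials according to whether they multiply $S$ or $D$, and using $\tfrac12\bke{e^{i\la l\cdot x}+e^{-i\la l\cdot x}}=\cos(\la l\cdot x)$ and $\tfrac12\bke{e^{i\la l\cdot x}-e^{-i\la l\cdot x}}=i\sin(\la l\cdot x)$, I obtain
\[
\La g=\bke{\sum_k S(k)\widehat a(k)e^{ik\cdot x}}\cos(\la l\cdot x)+\bke{\sum_k iD(k)\widehat a(k)e^{ik\cdot x}}\sin(\la l\cdot x).
\]

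It then remains to peel off the two explicit leading terms. In the cosine coefficient I split $S(k)=\la+(S(k)-\la)$: the constant $\la$ reconstitutes $\la a\cos(\la l\cdot x)=\la g$, while the remaining Fourier multiplier has symbol $S(k)-\la=\tfrac{|\la l+k|+|\la l-k|}{2}-\la$ (using $|k-\la l|=|\la l-k|$ and $|k+\la l|=|\la l+k|$), which is exactly $T^{(1)}_{\la l}a$ from \eqref{operator.T}. In the sine coefficient I split $iD(k)=i(l\cdot k)+i\bke{D(k)-l\cdot k}$: since $\widehat{l\cdot\na a}(k)=i(l\cdot k)\widehat a(k)$, the first part contributes $(l\cdot\na a)\sin(\la l\cdot x)$, while the remaining multiplier has symbol $i\bke{\tfrac{|\la l+k|-|\la l-k|}{2}-l\cdot k}$, which is precisely $T^{(2)}_{\la l}a$. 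Adding the four contributions gives the claimed formula.

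I do not expect a genuine obstacle here — this is a direct computation. The only points requiring care are the bookkeeping that pairs the even part $S$ with the cosine and the odd part $D$ with the sine, correctly tracking the factor $i$ produced by $\tfrac12\bke{e^{i\la l\cdot x}-e^{-i\la l\cdot x}}=i\sin(\la l\cdot x)$, and the elementary identity $|\la l\pm k|=|k\pm\la l|$ needed to match the symbols to the exact form written in \eqref{operator.T}.
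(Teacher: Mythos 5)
Your computation is correct and is essentially the paper's own argument: the paper proves the lemma by the same modulation fact $T_m(ae^{i\la l\cdot x})=(T_{m(\cdot+\la l)}a)e^{i\la l\cdot x}$, which is exactly what your mode-by-mode expansion carries out, followed by the same even/odd splitting of $|k\pm\la l|$ into the cosine and sine coefficients. (Your parenthetical about $\widehat a(\pm\la l)=0$ is not needed, since $\La$ multiplies the zero mode by $|0|=0$ anyway.)
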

\begin{proof} The proof follows from a simple calculation using the following fact:
If $\widehat{T_m g}(k) =  m(k) \widehat g(k)$,  then for any $n\in \mathbb Z^2$,
 {$T_m( g(x ) e^{i n\cdot x})
=(T_{m_1} g)e^{i n \cdot x} $}, where $m_1(k)=m(k+n)$. 
\end{proof}

By using  Lemma \ref{Leib}, we have
\EQ{\label{nonlinear}
\La f\na^{\perp}f  
\simp \bmt{main} + \bmt{non-oscillatory\; error} + \bmt{oscillatory\; error},
}
where (below $l^{\perp}=(-l_2, l_1)^{\intercal}$ for $l=(l_1, l_2)^{\intercal}$)
\begin{align}
\bmt{main}
&= - \frac 14\la \sum_{l} (l\cdot \na)(a_l^2) l^{\perp}, \notag \\ \bmt{non-oscillatory\; error}
&= -\frac 12 \la \sum_{l} (T_{\la l}^{(2)}a_l)a_l l^{\perp} + \frac 12 \sum_{l} (T_{\la l}^{(1)} a_l) \na^{\perp}a_l,  \notag \\
\bmt{oscillatory\; error}
 =& \ \frac 12 \sum_{l} (l\cdot\na a_l + T_{\la l}^{(2)} a_l )  (\la a_l l^{\perp} \cos (2\la l\cdot x)+\na^{\perp} a_l  \sin (2\la l\cdot x))
 \label{Er1} \tag{\text{osc1}}\\
& - \frac 12  \sum_l (T_{ \la l}^{(1)}a_l) (\la a_l l^{\perp} \sin (2\la l\cdot x)-\na^{\perp} a_l \cos(2\la l\cdot x)) 
\label{Er2}
\tag{\text{osc2}}\\
& - \la\sum_{l\neq l'} (l\cdot\na a_l + T_{\la l}^{(2)}a_l)  a_{l'} (l')^{\perp} \sin (\la l\cdot x) \sin (\la l'\cdot x) 
\label{Er3}\tag{\text{osc3}}\\
& + \sum_{l\neq l'} (l\cdot \na a_l + T_{\la l}^{(2)} a_l ) \na^{\perp} a_{l'} \sin (\la l \cdot x) \cos (\la l'\cdot x) \label{Er4}\tag{\text{osc4}}\\
& -\la \sum_{l\neq l'} (T_{\la l}^{(1)}a_l) a_{l'} (l')^{\perp} \cos(\la l\cdot x) \sin (\la l'\cdot x) 
\label{Er5}\tag{\text{osc5}}\\
& + \sum_{l \neq l'} (T_{\la l}^{(1)}a_l) \na^{\perp} a_{l'} \cos(\la l\cdot x) \cos (\la l'\cdot x) 
\label{Er6}\tag{\text{osc6}}.
\end{align}
Note that the leading-order term $\la f\na^\perp f$ in $\La f\na^\perp f$ vanishes since $ \na^\perp \left( \frac{\la}2f^2\right)\simp 0$. 
\subsection{Matching}
We begin with a simple yet powerful lemma. 

\begin{lemm}[Algebraic Lemma]\label{decomp.id} For a given $Q\in C_0^\infty(\T)$, we have the decomposition identity
\EQN{
\sum_{j=1}^2 l_j^{\perp} (l_j\cdot \na)(\cR_j^o Q) \simp \na Q,
	}
where $l_1 = (\frac 35,\frac 45)^{\intercal}$, $l_2 = (1,0)^{\intercal}$, and the Riesz-type transforms $\cR_j^o$, $j=1,2$ are defined by 
\[
\widehat{\cR_1^o}(k_1,k_2) = \frac{25(k_2^2 -k_1^2)}{12 |k|^2}, \quad
\widehat{\cR_2^o}(k_1,k_2) = \frac{7(k_2^2 -k_1^2)}{12 |k|^2} +\frac {4k_1k_2}{|k|^2}.
\]
\end{lemm}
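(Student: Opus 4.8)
\textbf{Proof proposal for Lemma \ref{decomp.id} (Algebraic Lemma).}
The plan is to verify the identity at the level of Fourier coefficients, since $\simp$ means ``equal up to $\na^{\perp}p$'' and a vector field $v$ on $\T$ equals $\na Q + \na^{\perp}p$ precisely when its divergence-free part matches that of $\na Q$, equivalently when $\na^{\perp}\cdot(v - \na Q)=0$, i.e. $\pd_1 v_2 - \pd_2 v_1 = \pd_1(\pd_2 Q) - \pd_2(\pd_1 Q) = 0$ after subtracting the gradient. Concretely, writing $v = \sum_j l_j^{\perp}(l_j\cdot\na)(\cR_j^o Q)$, I would compute $\na^{\perp}\cdot v$ and show it vanishes identically; then $v \simp \na q$ for \emph{some} scalar $q$, and a second short computation pins down $q = Q$. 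On the Fourier side, with $\widehat{(l_j\cdot\na)g}(k) = i(l_j\cdot k)\widehat{g}(k)$, each summand contributes $i(l_j\cdot k)\,l_j^{\perp}\,\widehat{\cR_j^o}(k)\,\widehat{Q}(k)$, so it suffices to prove the pointwise vector identity
\EQN{
\sum_{j=1}^2 (l_j\cdot k)\, l_j^{\perp}\, \widehat{\cR_j^o}(k) = k
}
for every $0\neq k\in\Z^2$ --- note both sides are homogeneous of degree $2$ in $k$ once one clears the $|k|^2$ in the denominators of $\widehat{\cR_j^o}$, so this reduces to a polynomial identity in $(k_1,k_2)$.

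The main step is therefore purely computational: plug in $l_1 = (\tfrac35,\tfrac45)^{\intercal}$, $l_1^{\perp} = (-\tfrac45,\tfrac35)^{\intercal}$, $l_2 = (1,0)^{\intercal}$, $l_2^{\perp}=(0,1)^{\intercal}$, and the given symbols $\widehat{\cR_1^o}(k) = \tfrac{25(k_2^2-k_1^2)}{12|k|^2}$, $\widehat{\cR_2^o}(k) = \tfrac{7(k_2^2-k_1^2)}{12|k|^2} + \tfrac{4k_1k_2}{|k|^2}$. Then $l_1\cdot k = \tfrac15(3k_1+4k_2)$, $l_2\cdot k = k_1$, and one expands
\EQN{
\tfrac15(3k_1+4k_2)\begin{pmatrix}-\tfrac45\\ \tfrac35\end{pmatrix}\cdot\tfrac{25(k_2^2-k_1^2)}{12|k|^2}
\;+\; k_1\begin{pmatrix}0\\1\end{pmatrix}\cdot\Bigl(\tfrac{7(k_2^2-k_1^2)}{12|k|^2}+\tfrac{4k_1k_2}{|k|^2}\Bigr),
}
multiply out, and check the first component equals $k_1$ and the second equals $k_2$ after multiplying through by $12|k|^2$ and using $|k|^2 = k_1^2+k_2^2$. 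This is a finite check of two quadratic-polynomial identities; the coefficients $25/12$, $7/12$, $4$ are evidently reverse-engineered to make it work, so the algebra should close cleanly. (One small sanity check to run first: the $k_1^2=k_2^2$ slice kills the $\cR_1^o$ part and most of $\cR_2^o$, leaving $k_1(0,1)^{\intercal}\cdot(4k_1k_2/|k|^2) = (0, 4k_1^2 k_2/|k|^2)^{\intercal} = (0, 2k_2)^{\intercal}$ on that slice where $|k|^2 = 2k_1^2$, which indeed matches $(k_1,k_2)$ only if $k_1 = 0$ --- so actually the full expansion is needed, the slice is not a shortcut, and I would just do the honest expansion.)

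I do not anticipate a genuine obstacle here: once the reduction to the pointwise Fourier identity $\sum_j (l_j\cdot k)l_j^{\perp}\widehat{\cR_j^o}(k) = k$ is made, everything is a bounded algebraic verification, and the homogeneity observation guarantees there are no convergence or boundary issues (the symbols are smooth away from $k=0$, and $Q$ has mean zero so $k=0$ is excluded). The only thing worth stating carefully is \emph{why} matching the curl suffices to conclude $v\simp \na Q$: on $\T$, the Hodge decomposition of a mean-zero vector field reads $v = \na\alpha + \na^{\perp}\beta$ with $\alpha,\beta$ mean-zero, and $\na^{\perp}\cdot v = -\Delta\beta$ determines $\beta$, while $\na\cdot v = \Delta\alpha$ determines $\alpha$; so showing $\na^{\perp}\cdot(v) = \na^{\perp}\cdot(\na Q) = 0$ forces $\beta = $ const $= 0$, i.e. $v = \na\alpha \simp \na Q$ up to the harmonic (hence constant-gradient, hence zero) ambiguity, and then comparing $\na\cdot v$ identifies $\alpha = Q$. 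In the Fourier-identity formulation above this is automatic, since proving the vector identity directly yields $\widehat{v}(k) = i k\,\widehat{Q}(k) = \widehat{\na Q}(k)$ exactly, with no $\na^{\perp}p$ remainder at all --- so in fact the lemma holds with $p\equiv 0$, which I would note.
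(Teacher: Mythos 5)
Your reduction to a pointwise Fourier identity is the right general idea, but the identity you propose to verify, $\sum_{j=1}^2 (l_j\cdot k)\, l_j^{\perp}\, \widehat{\cR_j^o}(k) = k$, is \emph{false}, and your own ``sanity check'' already exhibits a counterexample that you then explain away. At $k=(1,1)$ one has $\widehat{\cR_1^o}=0$ and $\widehat{\cR_2^o}=2$, so the left side is $(0,2)\neq(1,1)$; at $k=(0,1)$ the left side is $(-\tfrac43,1)\neq(0,1)$. So the lemma does \emph{not} hold with $p\equiv 0$, and the honest expansion you defer to would not close. The source of the error is your characterization of $\simp$: $v=\na Q+\na^{\perp}p$ for some $p$ iff the \emph{gradient} (curl-free) part of $v$ equals $\na Q$, i.e.\ iff $\na\cdot(v-\na Q)=0$; you instead require $\na^{\perp}\cdot(v-\na Q)=0$, which says $v-\na Q$ is itself a gradient --- the opposite condition. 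Since here $v-\na Q=\na^{\perp}p$ with $p\not\equiv 0$, the curl of $v$ does not vanish and that route fails at the first step.

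The correct (and sufficient) check is only the scalar identity obtained by dotting with $k$:
\begin{equation*}
k\cdot\Bigl(\sum_{j=1}^2 (l_j\cdot k)\, l_j^{\perp}\, \widehat{\cR_j^o}(k)\Bigr)
=\sum_{j=1}^2 (l_j^{\perp}\cdot k)(l_j\cdot k)\,\widehat{\cR_j^o}(k)=|k|^2 ,
\end{equation*}
which is exactly the paper's one-line proof $\sum_j (l_j^{\perp}\cdot\na)(l_j\cdot\na)(\cR_j^o Q)=\De Q$. Indeed, the $j=1$ term contributes $\tfrac{1}{12|k|^2}\bigl(12(k_2^2-k_1^2)^2-7k_1k_2(k_2^2-k_1^2)\bigr)$ and the $j=2$ term contributes $\tfrac{7k_1k_2(k_2^2-k_1^2)}{12|k|^2}+\tfrac{4k_1^2k_2^2}{|k|^2}$; the cross terms cancel and $(k_2^2-k_1^2)^2+4k_1^2k_2^2=|k|^4$. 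This shows $\na\cdot(v-\na Q)=0$, and a mean-zero divergence-free field on $\T$ is $\na^{\perp}p$ (Fourier mode by Fourier mode, $\widehat{v-\na Q}(k)\perp k$ forces it to be a multiple of $k^{\perp}$), giving $v\simp\na Q$ with a genuinely nonzero $p$. You should replace the vector identity by this scalar one and delete the claim that $p\equiv 0$.
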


\begin{proof}
This follows from the identity $\sum_{j=1}^2 (l_j^{\perp}\cdot \na)(l_j\cdot \na)(\cR_j^o Q) = \De Q$.
\end{proof}




\begin{prop}\label{matching}
Set $l_j$ and $\cR_j^o$, $j=1,2$ as in Lemma \ref{decomp.id}.  For given
$q_n \in C_0^{\infty}(\T)$,  choose $C_0\ge 2$ to be a fixed constant and
\begin{align}
a_{j,n+1}^{\operatorname{perfect}}=2\sqrt{\frac{r_n}{5\la_{n+1}}}   
 \sqrt{ 
C_0+ \cR_{j}^o\frac{q_n}{r_n}},  \,
\label{defn.f}
 \end{align}
 where $(\lambda_{n+1},r_n)$ are taken as in \eqref{par}. Then
%
\begin{align}\label{mismatch.pr}
-\frac 14  
\cdot (5\la_{n+1}) \cdot \Bigl(\sum_{j=1}^{2}l_j^{\perp}(l_j\cdot\na) (a_{j,n+1}^{\operatorname{perfect}})^2\Bigr)+\na q_n 
\simp  0.
\end{align}
\end{prop}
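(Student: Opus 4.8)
The plan is to reduce \eqref{mismatch.pr} to the Algebraic Lemma (Lemma \ref{decomp.id}) by a direct computation. First I would compute the square $(a_{j,n+1}^{\operatorname{perfect}})^2$ from \eqref{defn.f}: since the square root disappears,
\[
(a_{j,n+1}^{\operatorname{perfect}})^2 = \frac{4 r_n}{5\la_{n+1}}\left(C_0 + \cR_j^o \frac{q_n}{r_n}\right) = \frac{4 C_0 r_n}{5\la_{n+1}} + \frac{4}{5\la_{n+1}}\,\cR_j^o q_n.
\]
The key point of the algebraic construction is precisely that squaring removes the nonlinearity, so what remains is linear in $q_n$ plus a constant.

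Next I would plug this into the sum $\sum_{j=1}^2 l_j^\perp (l_j\cdot\na)(a_{j,n+1}^{\operatorname{perfect}})^2$. The constant term $\frac{4C_0 r_n}{5\la_{n+1}}$ is killed by the gradient $l_j\cdot\na$, so it contributes nothing. For the remaining term I would pull out the scalar factor $\frac{4}{5\la_{n+1}}$ and apply Lemma \ref{decomp.id} with $Q = q_n$ (noting $q_n\in C_0^\infty(\T)$, so the lemma applies), giving
\[
\sum_{j=1}^2 l_j^\perp (l_j\cdot\na)(a_{j,n+1}^{\operatorname{perfect}})^2 = \frac{4}{5\la_{n+1}}\sum_{j=1}^2 l_j^\perp(l_j\cdot\na)(\cR_j^o q_n) \simp \frac{4}{5\la_{n+1}}\,\na q_n,
\]
where $\simp$ absorbs a $\na^\perp(\text{smooth})$ term. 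Multiplying by $-\frac 14 (5\la_{n+1})$ then yields $-\na q_n$, and since $\simp$ is compatible with adding $\na q_n$ on both sides (it only modifies things by a $\na^\perp p$), we get $-\frac 14(5\la_{n+1})\sum_j l_j^\perp(l_j\cdot\na)(a_{j,n+1}^{\operatorname{perfect}})^2 + \na q_n \simp 0$, which is exactly \eqref{mismatch.pr}.

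I should also record the one point that requires care: the $\simp$ relation in Lemma \ref{decomp.id} introduces a smooth scalar potential $p$, and I must check that multiplying through by the constant $\frac{4}{5\la_{n+1}}$ and by $-\frac14(5\la_{n+1})$ keeps $p$ smooth — which is immediate since these are just scalar multiplications, and $\na^\perp(c\,p) = c\,\na^\perp p$. The only genuine content here is the algebraic identity already proved in Lemma \ref{decomp.id}; the rest is bookkeeping. Consequently I do not expect any real obstacle: the proposition is essentially a restatement of the Algebraic Lemma after the square root cancellation, with the explicit choice of prefactor $2\sqrt{r_n/(5\la_{n+1})}$ engineered precisely so that the $\la_{n+1}$ and numerical factors match.
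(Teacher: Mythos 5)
Your proof is correct and is exactly the paper's argument: square the ansatz so the square root cancels, note the constant term is annihilated by $l_j\cdot\na$, and apply Lemma \ref{decomp.id} with $Q=q_n$, after which the prefactors cancel by design. The paper states this in one line; your version just writes out the bookkeeping explicitly.
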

\begin{proof} The proof follows from applying Lemma \ref{decomp.id} to $Q=q_n$. 
\end{proof}
We now choose 
\begin{align} \label{defn.f_new}
f_{n+1}(x) = \sum_{j=1}^2 a_{j,n+1}(x) \cos (5\la_{n+1} l_j \cdot x),
\qquad a_{j,n+1}= P_{\le \mu_{n+1}} a_{j,n+1}^{\operatorname{perfect}},
\end{align}
where $\widehat{P_{\le \mu_{n+1}} g}(k)
=\psi (\frac k {\mu_{n+1}} ) \widehat g(k)$, and
$\psi \in C_c^{\infty}(\mathbb R^2)$ satisfies $\psi (k)= 0$ for $|k|\ge 1$,
and $\psi (k)=1$ for $|k|\le \frac 12$.  We have 
$\Lambda f_{n+1} \nabla^{\perp} f_{n+1} +\nabla q_n \simp 
\text{small error}$. In the next section we estimate the errors.

\section{Error estimates for $q_{n+1}$ }\label{sec4}
\begin{prop}\label{const.qn} Given $\nu\geq 0$, $0<\ga<\frac32$, $0<\be <\min\left(\frac13,3-2\ga\right)$, there exists $b_0=b_0(\nu,\ga,  \be)$ such that for any $0<b-1<b_0$ we can find $\La_0=\La_0(\nu, \ga, \be, b)$ for which the following holds.  If $\la_0 \geq \La_0$ and $(f_\len, q_n)$ satisfies (below $\cR_j^o$ are the same as in Lemma \ref{decomp.id})
\begin{itemize}
    \item the frequencies of $f_\len$ and $q_n$ are localized to $\leq 6\la_n$ and $\leq 12\la_n$, respectively,
    \item $\norm{f_\len}_ {C^\al(\T)}\le 100$ and $\norm{q_n}_X\leq r_n$ where 
    \begin{align} \label{X_def}
    \norm{q}_X := \norm{q}_\infty + \sum_{j=1}^2\norm{\cR_j^o q}_\infty.
    \end{align}
\end{itemize}
 then there exists $q_{n+1}\in C_0^\infty(\T)$ solving \eqref{eq.pert} with
 frequency localized to $\le 12 \lambda_{n+1}$,  
 $f_{n+1}$ defined by \eqref{defn.f} satisfying
\EQ{\label{est.qn}
\norm{q_{n+1}}_X \leq r_{n+1}.
}
\end{prop}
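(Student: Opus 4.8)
\emph{Overview.} The plan is to write $q_{n+1}$ down explicitly as the potential part of a Hodge decomposition of the residual of \eqref{eq.pert}, and then to estimate each contribution to that residual in $L^\infty$; the $\norm{\cdot}_X$ bound will follow because $\cR_j^o$ is an order‑zero Fourier multiplier acting on functions with frequencies in $\{|k|\le 12\la_{n+1}\}$, so it costs only an absorbable logarithmic factor in $\la_{n+1}$.

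\emph{Construction of $q_{n+1}$.} I would first expand $\La f_{n+1}\na^\perp f_{n+1}$ by Lemma~\ref{Leib} (with $\la$ replaced by $5\la_{n+1}$ and $a_l$ by $a_{j,n+1}=P_{\le\mu_{n+1}}a_{j,n+1}^{\operatorname{perfect}}$) into the decomposition \eqref{nonlinear}. The key algebraic fact is that $(a_{j,n+1}^{\operatorname{perfect}})^2=\tfrac{4C_0r_n}{5\la_{n+1}}+\tfrac{4}{5\la_{n+1}}\cR_j^o q_n$ is \emph{affine} in $q_n$ — squaring exactly undoes the square root in \eqref{defn.f} — so Lemma~\ref{decomp.id}/Proposition~\ref{matching} give $\bmt{main}+\na q_n\simp-\tfrac{5\la_{n+1}}{4}\sum_j l_j^\perp(l_j\cdot\na)\bigl(a_{j,n+1}^2-(a_{j,n+1}^{\operatorname{perfect}})^2\bigr)$, and this leftover has frequencies in $\{|k|\le 2\mu_{n+1}\}$ (both $a_{j,n+1}^2$ and $(a_{j,n+1}^{\operatorname{perfect}})^2$ do) even though $a_{j,n+1}^{\operatorname{perfect}}$ itself does not. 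Writing $\theta_\bullet=\La f_\bullet$ and $u_\bullet=\na^\perp\La^{-1}\theta_\bullet$, the interaction terms in \eqref{eq.pert} are $\La f_\len\na^\perp f_{n+1}=\theta_\len u_{n+1}$ (Nash‑type) and $\La f_{n+1}\na^\perp f_\len=\theta_{n+1}u_\len$ (transport‑type), and the dissipation $\nu\La^{\ga-1}\na f_{n+1}=\nu\na(\La^{\ga-1}f_{n+1})$ is a pure gradient. Hence the left side of \eqref{eq.pert} minus $\nu\na(\La^{\ga-1}f_{n+1})$ equals, modulo $\na^\perp(\cdot)$, a single vector field $W$ — the sum of the leftover, the non‑oscillatory error of \eqref{nonlinear}, the oscillatory terms \eqref{Er1}--\eqref{Er6}, and $\theta_\len u_{n+1}+\theta_{n+1}u_\len$ — which is mean‑zero (using the disjoint frequency supports of $f_\len$ and $f_{n+1}$, and $k\mapsto-k$ antisymmetry of $\La f_{n+1}\na^\perp f_{n+1}$) and has frequencies in $\{|k|\le 12\la_{n+1}\}$. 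Then $q_{n+1}:=\Delta^{-1}\na\cdot W-\nu\La^{\ga-1}f_{n+1}$ is a mean‑zero $C_0^\infty$ function with the required frequency localization solving \eqref{eq.pert}, since $W-\na\Delta^{-1}\na\cdot W$ is the divergence‑free mean‑zero Leray part of $W$, hence of the form $\na^\perp(\cdot)$.

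\emph{Estimates.} All bounds use Bernstein on frequency‑localized functions. The leftover $a_{j,n+1}^2-(a_{j,n+1}^{\operatorname{perfect}})^2$ is super‑small: since $\sqrt{C_0+\cR_j^o q_n/r_n}$ is a smooth function of a quantity with frequencies $\le 12\la_n$, sup‑norm $\le1$ and value $\ge1$ (as $C_0\ge2$), one has $\norm{\na^M a_{j,n+1}^{\operatorname{perfect}}}_\infty\lec_M\la_n^M\sqrt{r_n/\la_{n+1}}$, hence $\norm{P_{>\mu_{n+1}}a_{j,n+1}^{\operatorname{perfect}}}_\infty\lec_M(\la_n/\mu_{n+1})^M\sqrt{r_n/\la_{n+1}}=\la_n^{-M(b-1)/2}\sqrt{r_n/\la_{n+1}}$, which beats every power of $\la_n$. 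For the non‑oscillatory error, the symbols \eqref{operator.T} give $\norm{T^{(1)}_{5\la_{n+1}l}g}_\infty\lec\mu_{n+1}^2\la_{n+1}^{-1}\norm{g}_\infty$ and $\norm{T^{(2)}_{5\la_{n+1}l}g}_\infty\lec\mu_{n+1}^3\la_{n+1}^{-2}\norm{g}_\infty$ for $g$ of frequency $\le\mu_{n+1}$, so with $\norm{a_{j,n+1}}_\infty\lec\sqrt{r_n/\la_{n+1}}$, $\norm{\na a_{j,n+1}}_\infty\lec\mu_{n+1}\sqrt{r_n/\la_{n+1}}$ this error is $\lec\mu_{n+1}^3\la_{n+1}^{-2}r_n$, which $\Delta^{-1}\na\cdot$ (acting at frequency $\lesssim\mu_{n+1}$) turns into $\lec\mu_{n+1}^2\la_{n+1}^{-2}r_n$. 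Each of \eqref{Er1}--\eqref{Er6} has amplitude $\lec\mu_{n+1}r_n$ and lives at frequency $\sim\la_{n+1}$ (using $l_1\pm l_2\ne0$ for the $l\ne l'$ pieces), so it contributes $\lec\mu_{n+1}\la_{n+1}^{-1}r_n$. For the interaction terms, $\na\cdot u_\bullet=0$ forces $\na\cdot(\theta_\len u_{n+1})=u_{n+1}\cdot\na\theta_\len$ and $\na\cdot(\theta_{n+1}u_\len)=u_\len\cdot\na\theta_{n+1}$, both at frequency $\sim\la_{n+1}$, so $\Delta^{-1}$ recovers a full two‑derivative gain $\la_{n+1}^{-2}$; combined with $\norm{\theta_\len}_\infty,\norm{u_\len}_\infty\lec\la_n^{1-\al}$, $\norm{\na\theta_\len}_\infty\lec\la_n^{2-\al}$ (from $\norm{f_\len}_{C^\al}\le100$ and frequencies $\le6\la_n$) and $\norm{\theta_{n+1}}_\infty,\norm{u_{n+1}}_\infty\lec\sqrt{\la_{n+1}r_n}$, $\norm{\na\theta_{n+1}}_\infty\lec\la_{n+1}^{3/2}\sqrt{r_n}$, these give $\lec\la_{n+1}^{-3/2}\la_n^{2-\al}\sqrt{r_n}$ and $\lec\la_{n+1}^{-1/2}\la_n^{1-\al}\sqrt{r_n}$ respectively; the dissipation term contributes $\lec\nu\la_{n+1}^{\ga-3/2}\sqrt{r_n}$.

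\emph{Bookkeeping and the obstacle.} Substituting $\la_{n+1}\sim\la_n^b$, $r_n=\la_n^{-\be}$, $\mu_{n+1}\sim\la_n^{(b+1)/2}$ and $\al=\tfrac12+\tfrac{\be}{2b}-\ep_0$, the five exponents of $\la_n$ above become $1-b-\be$, $\tfrac{1-b}{2}-\be$, $\tfrac32-\tfrac{3b}{2}-\tfrac{\be}{2}-\tfrac{\be}{2b}+\ep_0$, $\tfrac12-\tfrac{b}{2}-\tfrac{\be}{2}-\tfrac{\be}{2b}+\ep_0$ and $b\ga-\tfrac{3b}{2}-\tfrac{\be}{2}$, and it suffices that each be strictly less than $-b\be$, which then gives $\norm{q_{n+1}}_X\le r_{n+1}$ once $\la_0$ is large enough to absorb the implied constants, the logarithmic loss in $\norm{\cdot}_X$, and the super‑small tails. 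Apart from the $\ep_0$‑shift the first four conditions are equalities at $b=1$, with $b$‑derivatives there equal to $\be-1$, $\be-\tfrac12$, $\tfrac32(\be-1)$ and $\tfrac{3\be-1}{2}$ — all negative exactly when $\be<1$, $\be<\tfrac12$, $\be<1$ and $\be<\tfrac13$ — while the dissipation condition is the strict inequality $\ga-\tfrac32+\tfrac{\be}{2}<0$, i.e.\ $\be<3-2\ga$; so choosing first $b-1$ small (depending on $\nu,\ga,\be$) and then $\ep_0\ll b-1$ makes all five hold with room. The delicate term — and the main obstacle — is the transport interaction $\theta_{n+1}u_\len$: even after the divergence‑free cancellation it is only barely subcritical, and it is exactly this term that forces $\be<\tfrac13$ (hence $\al<\tfrac12+\tfrac16$) and $b$ very close to $1$; the super‑small tails and the passage from $\norm{\cdot}_\infty$ to $\norm{\cdot}_X$ are, by contrast, routine.
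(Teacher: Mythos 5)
Your overall architecture — the decomposition of the residual into a mismatch part (further split into the $P_{>\mu_{n+1}}$ leftover, the non‑oscillatory error, and the oscillatory error), a transport part, and a dissipation part, followed by $q_{n+1}=\Delta^{-1}\nabla\cdot(\cdots)$ and the same bookkeeping in $b,\be,\ga$ — is exactly the paper's. Your treatment of the leftover (super‑polynomial smallness of $P_{>\mu_{n+1}}a^{\operatorname{perfect}}_{j,n+1}$), of the oscillatory terms, of the transport terms (where your use of $\nabla\cdot u=0$ is a harmless variant of the paper's direct frequency‑localization bound), and of the dissipation all close correctly.

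There is, however, one genuine gap: the non‑oscillatory error. You bound it in $L^\infty$ by $\mu_{n+1}^3\la_{n+1}^{-2}r_n$ and then claim that $\Delta^{-1}\nabla\cdot$ "acting at frequency $\lesssim\mu_{n+1}$" converts this into $\mu_{n+1}^2\la_{n+1}^{-2}r_n$. That step is false: the symbol of $\Delta^{-1}\nabla$ is $-ik/|k|^2$, which gains a factor $|k|^{-1}$ only when the frequency is bounded \emph{below}; the non‑oscillatory error $-\tfrac12\la\sum_j(T^{(2)}_{\la l_j}a_j)a_jl_j^\perp+\tfrac12\sum_j(T^{(1)}_{\la l_j}a_j)\nabla^\perp a_j$ is a low‑frequency (high‑high to low) output whose Fourier support goes all the way down to $|k|=1$, so on its support $\Delta^{-1}\nabla\cdot$ is merely bounded up to a $\log\mu_{n+1}$ loss (Lemma \ref{L_m90}), with no factor $\mu_{n+1}^{-1}$. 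Without that gain your bound is $\mu_{n+1}^3\la_{n+1}^{-2}r_n\log\mu_{n+1}\sim\la_n^{(3-b)/2-\be}\log\la_n$, which for $b$ close to $1$ is enormous compared with $r_{n+1}$, so the iteration does not close. The missing ingredient is the paper's Lemma \ref{L_00b} (used in Lemma \ref{comm.est}): symmetrizing $k'\leftrightarrow k-k'$ and using the oddness of $\phi(z)=|l+z|-|l-z|-2l\cdot z$ (resp.\ the evenness of the $T^{(1)}$ symbol), one writes $a\,T^{(2)}_{\la l}a$ and $(T^{(1)}_{\la l}a)\partial a$ as $\la^{-2}\mu^2\,\partial_{x_i}T_{m_i}(a,a)$ with $L^1$ kernels, i.e.\ one extracts an explicit derivative that cancels against $\Delta^{-1}\nabla\cdot$. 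Your observation that the error has zero mean is precisely the $k=0$ instance of this cancellation; the point is that the same mechanism yields decay proportional to $|k|$ for all small $k$, and that requires the bilinear‑multiplier argument, not a frequency‑support count. Once this is supplied, the bound $\norm{q_{M2}}_X\lec r_n\mu_{n+1}^2\la_{n+1}^{-2}\log\mu_{n+1}$ you quoted is correct and the rest of your bookkeeping goes through.
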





\begin{proof}
Rewrite \eqref{eq.pert} as
\EQN{
\na q_{n+1} &\simp  
\underbrace{\La f_{n+1}\na^\perp f_{n+1}  + \na q_n}_{\text{Mismatch error}} 
+ \underbrace{\La f_{n+1}\na^\perp f_\len  + \La f_\len \na^\perp f_{n+1}  }_{\text{Transport error}} 
 \underbrace{-\nu\na\La^{\ga -1}f_{n+1}}_{\text{Dissipation error}} \\
& =: \na q_M + \na q_T + \na q_D.
}
Frequency localization of $q_{n+1}$ can be easily deduced from $q_M$, $q_T$, and $q_D$ which are defined below.
For convenience, we shall write $a_{j,n+1}$ as $a_j$ in the computation below.

\bigskip

\noindent\texttt{Mismatch error.} \ By \eqref{nonlinear}, we can further decompose the mismatch error 
as
\EQN{
\na q_M &\simp (\;\bmt{main}+ \na q_n)+ \bmt{non-oscillatory\; error} + \bmt{oscillatory\;error} \\
&\simp \na q_{M1} + \na q_{M2}+\na q_{M3}. 
}
By  Lemma \ref{Ap60.1}, $q_{M1}$ is defined as in \eqref{qM1} and satisfies
\begin{align} \label{qm1.est}
\norm{q_{M1}}_X
\lesssim r_n (\mu_{n+1}^{-1} \lambda_n)^2  \log \mu_{n+1}.
\end{align}

Note that both $\bmt{non-oscillatory\; error}$ and $\bmt{oscillatory\;error}$ have zero means, so we define 
\EQN{
q_{M2}
=\De^{-1}\na \cdot \bmt{non-oscillatory\;error}, \quad
q_{M3}
=\De^{-1}\na \cdot \bmt{oscillatory\;error}
}
in $C_0^\infty(\T)$. We postpone the estimate for $q_{M2}$ to Appendix, where Lemma \ref{comm.est} proves 
\EQ{\label{qm2.est}
\norm{q_{M2}}_X 
\lesssim  r_n \lambda_{n+1}^{-2} \mu_{n+1}^2 \log \mu_{n+1}.
}

Next we estimate $q_{M3}$. Denote $T_{n+1,j}^{(i)} = T_{{5\la_{n+1}l_j}}^{(i)}$ for $i,j=1,2$.  
By Lemma \ref{L_00a},  we have
\begin{align} 
&\norm{T_{n+1,j}^{(1)}a_j}_\infty
\lesssim\;  \lambda_{n+1}^{-1} \mu_{n+1}^2 \sqrt{\frac{r_n}{\lambda_{n+1}}},
\quad
\norm{T_{n+1,j}^{(2)}a_j}_\infty
\lesssim\; \lambda_{n+1}^{-2} \mu_{n+1}^3 \sqrt{\frac{r_n}{\lambda_{n+1}}}.
\label{T2.est}
\end{align}
Since all terms in \text{(oscillatory error)} have the frequency localized to $\sim \la_{n+1}$ provided that $48\la_n \leq \la_{n+1}$, the estimate for $q_{M3}$ easily follows from \eqref{T2.est}:
\EQN{
\norm{\De^{-1}\na \cdot \eqref{Er1}}_X
&\lec 
\sum_{j=1}^2 (\norm{\na a_j}_\infty + \norm{T_{n+1,j}^{(2)}a_j}_\infty)(\norm{a_j}_\infty + \la_{n+1}^{-1} \norm{\na^\perp a_j}_\infty)
\lesssim \bke{\frac{\la_n}{\la_{n+1}}} r_n,
}
\[
\norm{\De^{-1}\na \cdot \eqref{Er2}}_X
\lec \sum_{j=1}^2\norm{T_{{n+1,j}}^{(1)}a_j}_\infty (\norm{a_j}_\infty + \la_{n+1}^{-1} \norm{\na^\perp a_j}_\infty)
\lesssim
\left(\frac{\la_n}{\la_{n+1}}\right)r_n.
\]
The estimates for \eqref{Er3}-\eqref{Er6} are similar (using $2/\sqrt5\leq |l_1\pm l_2| \leq 4/\sqrt5$) and therefore
\EQ{\label{qm3.est}
\norm{q_{M3}}_X 
\lesssim \bke{\frac{\la_n}{\la_{n+1} }}  {r_n}.
}

Combining \eqref{qm1.est}, \eqref{qm2.est}, and \eqref{qm3.est} and using $b>1$, $\be<1$, we can find $\La_M= \La_M(\be, b)$ such that for any $\la_0\geq \La_M$, we get $q_M = q_{M1}+q_{M2}+q_{M3} \in C_0^{\infty}(\T)$ satisfying
\[
\norm{q_{M}}_X 
\le \frac 13 r_{n+1}.
\]

\noindent\texttt{Transport error.} Define
\[
\text{\smb{$
q_T = \De^{-1}\na \cdot (\La f_{n+1}\na^\perp f_\len  +\La f_\len \na^\perp f_{n+1}) \in C_0^\infty(\T)$}}. \label{linearerr}
\]
Since $\La f_{n+1}\na^\perp f_\len  +\La f_\len \na^\perp f_{n+1}$ is frequency-localized to $\sim \la_{n+1}$, using $\norm{f_\len}_{C^\al}\leq 100$, we get
\EQN{
\norm{q_T}_X 
\lesssim  \norm{f_{n+1}}_\infty (\norm{\na^\perp f_\len}_\infty
+\norm{\La f_\len}_\infty)
\leq C_\al \la_n^{1-\al}
\text{\smb{$\sqrt{\frac{r_n }{\la_{n+1}}}$}}
\le \frac 13 r_{n+1}
}
for some constant $C_\al>0$. We can find $\La_T=\La_T(\be, b)$ such that for any $\la_0\geq \La_T$ the last inequality holds since $b>1$ and $\be<\frac 15$. 
\bigskip

\noindent\texttt{Dissipation error.} \ We define $q_D = -\nu \La^{\ga-1} f_{n+1}\in C_0^\infty(\T)$ which satisfies
\EQN{
\norm{q_D}_X 
\le C_2 \la_{n+1}^{\ga-1} \norm{f_{n+1}}_\infty
\le 5C_2 \la_{n+1}^{\ga-1} 
\text{\smb{$\sqrt{\frac{r_n}{\la_{n+1}}}$}}
\le \frac 13 r_{n+1}, \label{diserr}}
for some  $C_2=C_2(\nu, \ga)>0$. Since
$\be < 3-2\ga$, we can find sufficiently small $b_0=b_0(\nu, \ga,\be)$ such that for any $1<b<b_0+1$ there exists $\La_D=\La_D(\nu, \ga,\be,b)$ which leads the last inequality for any $\la_0\geq \La_D$.

\medskip

Collecting the estimates, we obtain   $\norm{q_{n+1}}_X \leq r_{n+1}$
if $\lambda_0>\La_0 = \max(\La_M, \La_T, \La_D)$.
\end{proof}


\appendix
\section{Non-oscillatory error estimate }
\label{appendix}

\begin{lemm} \label{L_m90}
Suppose $a:\, \mathbb T^2\to \mathbb R$ with $\operatorname{supp}
(\widehat{a})\subset \{ |k| \le \mu \}$ and $\mu \ge 10$.  Then for any
Riesz-type operator $\mathcal R$ we have
\smb{$
\| \mathcal R a \|_{\infty} \lesssim  \| a \|_{\infty} \log \mu$}.

\end{lemm}
\begin{proof}
WLOG we can assume $\bar a=0$. By using LP-decomposition, 
splitting into low and high frequencies and choosing \smb{$J=2\log \mu$}, we obtain 
\smb{$\|\mathcal R a \|_{\infty} \lesssim  (J+3) \| a\|_{\infty} + 2^{-J} \| \nabla a \|_{\infty}
\lesssim (J+3+ 2^{-J} \mu) \|a\|_{\infty} \lesssim \|a\|_{\infty} \log \mu$}.  
\end{proof}
\noindent
We now state two useful facts. 
Assume \smb{$f \in C^{\infty}(\mathbb T^2)$} and \smb{$K \in L^1(\mathbb R^2)$}
with \smb{$m(\xi) = \int_{\mathbb R^2} K(z) e^{-i  \xi \cdot z} dz$}.  
Then\footnote{Here and below we still 
denote by $f$ its periodic extension to all of $\mathbb R^2$.} 
\begin{align}
\text{\smb{$(T_m f)(x) := \sum_k m(k) \hat f (k) e^{ik \cdot x}
= \int_{\mathbb R^2} K(z) f(x-z) dz$},}
\quad \text{\smb{$\|T_m f \|_{L^p_x(\mathbb T^2)}
\le \| K\|_{L_x^1(\mathbb R^2)} \| f \|_{L^p_x(\mathbb T^2)} $,}}
\; \text{\smb{$\forall\, 1\le p\le \infty$}}. \label{fact.0.01}
\end{align}
Assume \smb{$f, g\in C^{\infty}(\mathbb T^2)$} and 
\smb{$K \in L^1(\mathbb R^2 \times
\mathbb R^2)$} with \smb{$m(\xi,\eta)=\int_{\mathbb R^2\times
\mathbb R^2}
K(z_1,z_2)e^{-i \xi \cdot z_1 -i \eta \cdot z_2} dz_1 dz_2$}. Then
\begin{align}
\text{\smb{$
T_{m}(f,g)(x) := \sum_k \Bigl( \sum_{k^{\prime} \in \mathbb Z^2}
m(k^{\prime}, k-k^{\prime}) \hat f(k^{\prime}) \hat g(k-k^{\prime}) \Big)e^{i k\cdot x} 
=  \int_{\mathbb R^2\times \mathbb R^2}
K(z_1, z_2) f(x-z_1) g(x-z_2) dz_1 dz_2,
$}}  \label{fact.0.02}
\end{align}
and consequently
\smb{$
\| T_m(f,g) \|_{L_x^r(\mathbb T^2)} \le 
 \| K\|_{L_x^1(\mathbb R^2\times \mathbb R^2)}
\| f \|_{L_x^p(\mathbb T^2)} \| g \|_{L_x^q(\mathbb T^2)}$}
for any \smb{$1\le r,p,q\le \infty$} with \smb{$\frac 1r = \frac 1p +\frac 1 q$}.

\begin{lemm} \label{L_00a}
Assume \smb{$b_0:\mathbb T^2\to \mathbb R$} with
$\operatorname{supp}(\widehat{b_0}) \subset\{ |k | \le \mu \}$ and 
$10\le \mu \le  \frac 12 \lambda$. Then (see \eqref{operator.T})
\begin{align*}
\text{\smb{$
\| T^{(1)}_{\lambda l} b_0\|_{\infty}  \lesssim \lambda^{-1} \mu^2 \|b\|_{\infty}$}},
\quad 
\text{\smb{$
\| T^{(2)}_{\lambda l} b_0\|_{\infty}
\lesssim \lambda^{-2} {\mu^3} \| b_0\|_{\infty}$}},
\quad
\text{\smb{$
\| \Delta^{-1} \nabla T^{(2)}_{\lambda l} b_0\|_{X}
\lesssim  \|b_0\|_{\infty} \lambda^{-2} {\mu^2} \log \mu $}}.
\end{align*}
\end{lemm}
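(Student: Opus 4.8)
The plan is to reduce each of the three bounds to an $L^1$ estimate on a convolution kernel via \eqref{fact.0.01}, after expanding the symbols in \eqref{operator.T} in the small parameter $|k|/\lambda$. Write $p_1(\xi)=\tfrac{|\lambda l+\xi|+|\lambda l-\xi|}2-\lambda$ and $p_2(\xi)=\tfrac{|\lambda l+\xi|-|\lambda l-\xi|}2-l\cdot\xi$, so that $\widehat{T^{(1)}_{\lambda l}a}=p_1\widehat a$ and $\widehat{T^{(2)}_{\lambda l}a}=i\,p_2\widehat a$. Since $|l|=1$, one has $p_i(\xi)=\lambda\,h_i(\xi/\lambda)$ with $h_1(\epsilon)=\tfrac12(|l+\epsilon|+|l-\epsilon|)-1$ and $h_2(\epsilon)=\tfrac12(|l+\epsilon|-|l-\epsilon|)-l\cdot\epsilon$; both are smooth on $\{|\epsilon|\le\tfrac34\}$ (there $|l\pm\epsilon|\ge\tfrac14$), $h_1$ is even with $h_1(0)=\nabla h_1(0)=0$, and $h_2$ is odd with $\nabla h_2(0)=0$. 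Taylor's theorem then gives $|\partial^\alpha h_1(\epsilon)|\lesssim_\alpha|\epsilon|^{\max(2-|\alpha|,0)}$ and $|\partial^\alpha h_2(\epsilon)|\lesssim_\alpha|\epsilon|^{\max(3-|\alpha|,0)}$ on $\{|\epsilon|\le\tfrac34\}$, hence after rescaling, for all $\alpha$ and $|\xi|\le\tfrac34\lambda$,
\[
|\partial^\alpha_\xi p_1(\xi)|\lesssim_\alpha \lambda^{1-|\alpha|}(|\xi|/\lambda)^{\max(2-|\alpha|,0)},\qquad |\partial^\alpha_\xi p_2(\xi)|\lesssim_\alpha \lambda^{1-|\alpha|}(|\xi|/\lambda)^{\max(3-|\alpha|,0)};
\]
in particular $|p_1(\xi)|\lesssim|\xi|^2/\lambda$ and $|p_2(\xi)|\lesssim|\xi|^3/\lambda^2$. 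These vanishing orders at the origin are precisely the cancellations behind the gains $\lambda^{-1}$ and $\lambda^{-2}$.

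For the first two bounds I would fix $\chi\in C_c^\infty(\mathbb R^2)$ with $\chi\equiv1$ on $B_1$ and $\operatorname{supp}\chi\subset B_{5/4}$, and set $m_i(\xi)=p_i(\xi)\chi(\xi/\mu)$ with inverse Fourier transform $K_i$ on $\mathbb R^2$. Since $\operatorname{supp}\widehat{b_0}\subset B_\mu$ and $\mu\le\tfrac12\lambda$ (so $\operatorname{supp}m_i\subset B_{5\mu/4}\subset B_{3\lambda/4}$), one has $T^{(1)}_{\lambda l}b_0=K_1*b_0$ and $T^{(2)}_{\lambda l}b_0=i\,K_2*b_0$ by \eqref{fact.0.01}. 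On $B_{5\mu/4}$, where $|\xi|/\lambda\le\tfrac58$, the symbol bounds give $|\partial^\beta p_1(\xi)|\lesssim\lambda^{-1}\mu^{2-|\beta|}$ and $|\partial^\beta p_2(\xi)|\lesssim\lambda^{-2}\mu^{3-|\beta|}$ for every $\beta$ (for large $|\beta|$ one absorbs the loss using $\mu\le\lambda$), so by Leibniz $|\partial^\alpha m_1|\lesssim_\alpha\lambda^{-1}\mu^2\mu^{-|\alpha|}$ and $|\partial^\alpha m_2|\lesssim_\alpha\lambda^{-2}\mu^3\mu^{-|\alpha|}$. Integrating by parts three times (the dimension is two) yields $\|K_1\|_{L^1(\mathbb R^2)}\lesssim\lambda^{-1}\mu^2$ and $\|K_2\|_{L^1(\mathbb R^2)}\lesssim\lambda^{-2}\mu^3$, and Young's inequality gives $\|T^{(1)}_{\lambda l}b_0\|_\infty\lesssim\lambda^{-1}\mu^2\|b_0\|_\infty$ and $\|T^{(2)}_{\lambda l}b_0\|_\infty\lesssim\lambda^{-2}\mu^3\|b_0\|_\infty$.

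For the third bound one must be more careful: estimating $T^{(2)}_{\lambda l}b_0$ in $L^\infty$ first and only then applying $\Delta^{-1}\nabla$ would yield $\mu^3$, since $\Delta^{-1}\nabla$ gains $\mu^{-1}$ only on the high-frequency portion of a function supported in $B_\mu$. Instead I would treat the combined symbol $M(\xi)=\tfrac{\xi\,p_2(\xi)}{|\xi|^2}\chi(\xi/\mu)$ at once. Because $p_2(0)=0$, the function $T^{(2)}_{\lambda l}b_0$ has zero mean, so on $\T$ its Fourier support lies in $\{1\le|k|\le\mu\}$; hence $M$ may be multiplied by a cutoff $\eta$ vanishing on $B_{1/2}$ and equal to $1$ outside $B_1$ without changing the operator, which erases the mild non-smoothness of $M$ at $0$. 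Using $|\partial^\alpha(\xi/|\xi|^2)|\lesssim|\xi|^{-1-|\alpha|}$ and $|\partial^\alpha p_2(\xi)|\lesssim\lambda^{-2}|\xi|^{3-|\alpha|}$ for $|\xi|\lesssim\lambda$, one finds on each dyadic shell $|\xi|\sim R$, $1\lesssim R\lesssim\mu$, that $|\partial^\alpha(\eta M)(\xi)|\lesssim_\alpha\lambda^{-2}R^2R^{-|\alpha|}$; summing the shell-by-shell kernel bounds $\sum_{1\lesssim R\lesssim\mu}\lambda^{-2}R^2\lesssim\lambda^{-2}\mu^2$ (geometric, dominated by the top shell) gives $\|K\|_{L^1(\mathbb R^2)}\lesssim\lambda^{-2}\mu^2$ for the inverse Fourier transform $K$ of $\eta M$, whence $\|\Delta^{-1}\nabla T^{(2)}_{\lambda l}b_0\|_\infty=\|K*b_0\|_\infty\lesssim\lambda^{-2}\mu^2\|b_0\|_\infty$. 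Finally $\Delta^{-1}\nabla T^{(2)}_{\lambda l}b_0$ has zero mean, Fourier support in $B_\mu$, and $\mu\ge10$, so Lemma \ref{L_m90} applied to each Riesz-type operator $\cR_j^o$ gives $\|\cR_j^o\Delta^{-1}\nabla T^{(2)}_{\lambda l}b_0\|_\infty\lesssim\log\mu\,\|\Delta^{-1}\nabla T^{(2)}_{\lambda l}b_0\|_\infty$, and combining with \eqref{X_def} (and $\log\mu\ge1$) one concludes $\|\Delta^{-1}\nabla T^{(2)}_{\lambda l}b_0\|_X\lesssim\lambda^{-2}\mu^2\log\mu\,\|b_0\|_\infty$.

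The main obstacle will be exactly this last step: the third-order vanishing of $p_2$ must be harvested jointly with the $|\xi|^{-1}$ gain of $\Delta^{-1}\nabla$, since performing the two steps separately is genuinely lossy, while the origin singularity of the combined symbol has to be disposed of simultaneously — which is possible here only because on $\T$ the relevant frequencies satisfy $|k|\ge1$. Everything else is routine kernel bookkeeping resting on the Taylor expansion of $|l\pm\epsilon|$.
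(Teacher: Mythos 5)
Your proof is correct and follows essentially the same route as the paper: Taylor-expanding $|l\pm z|$ to expose the quadratic (resp.\ cubic) vanishing of the symbols of $T^{(1)}_{\lambda l}$ (resp.\ $T^{(2)}_{\lambda l}$), cutting off at frequency $\mu$, and converting symbol bounds into $L^1$ kernel bounds via \eqref{fact.0.01}. For the third estimate the paper ``extracts a gradient'' from the cubic symbol so that $\Delta^{-1}\nabla T^{(2)}_{\lambda l}$ becomes a Riesz-type operator (handled by Lemma \ref{L_m90}, whence the $\log\mu$) composed with a kernel of $L^1$-norm $\lambda^{-2}\mu^2$, whereas you bound the combined symbol $\xi\,p_2(\xi)/|\xi|^2$ directly on dyadic shells and invoke Lemma \ref{L_m90} only for the $\cR_j^o$ part of the $X$-norm --- a cosmetic variation exploiting the same joint cancellation.
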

\begin{proof}
We show only the first one as the rest are
similar. 
Choose \smb{$\phi_1 \in C_c^{\infty}(\mathbb R^2)$} such that
\smb{$\phi_1(\xi) \equiv 1$} for 
\smb{$|\xi| \le 1$} and \smb{$\phi_1(\xi) \equiv 0$}
for \smb{$|\xi|\ge 1.1$}. Denote {$\phi_2(z)= |l+z|+|l-z|-2$} and note that 
for \smb{$|z| \le \frac 23$} we have 
\smb{$\phi_2(z) = \sum_{i,j=1}^2 h_{ij}(z) z_i z_j$} for some
$h_{ij} \in C^{\infty}$. By \eqref{fact.0.01} it suffices to show
 \smb{$\|F\|_{L_x^1(\mathbb R^2)}
\lesssim \lambda^{-2} \mu^2$} for
 \smb{$F(x)=
\int_{\mathbb R^2} \phi_2(\lambda^{-1} \xi)
\phi_1(\mu^{-1} \xi) e^{i \xi \cdot x} d\xi$}.  This follows from a change of
variable \smb{$\mu^{-1}\xi \to \xi$} and integration by parts. 
For the third estimate one can extract an extra gradient from the symbol and then
use Lemma \ref{L_m90}.
\end{proof}

\begin{lemm} \label{L_00b}
Let \smb{$\operatorname{supp}(\widehat{b_0}) \subset\{ |k | \le \mu \}$},
\smb{$\mu \le  \frac 12 \lambda$}. Then for\footnote{Here \smb{$\mathcal F^{-1}$} denotes
Fourier inverse transform on \smb{$\mathbb R^2\times \mathbb R^2$}. See 
\eqref{fact.0.02}.}
 some \smb{$K_i=\mathcal F^{-1}(m_i)$} with
\smb{$\|K_i\|_{L^1(\mathbb R^4)} \lesssim 1$}, we have
\begin{align*}
\text{\smb{$
b_0 T_{\lambda l}^{(2)} b_0  =\frac {\mu^2}{\lambda^2} \sum_{i=1}^2 \partial_{x_i}T_{m_i} 
(b_0,b_0)$
}},
\quad
\text{\smb{$
 (T_{\lambda l}^{(1)} b_0 )\partial_{x_1} 
b_0  =\frac {\mu^2}{\lambda} \sum_{i=3}^4 \partial_{x_i}T_{m_i} (b_0,b_0)$}},
\quad 
\text{\smb{$
(T_{\lambda l}^{(1)} b_0 )\partial_{x_2} 
b_0  =\frac {\mu^2}{\lambda} \sum_{i=5}^6 \partial_{x_i}T_{m_i} (b_0,b_0)$}}.
\end{align*}
\end{lemm}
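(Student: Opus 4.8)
The plan is to pass to the symbol side and reduce all three identities to a single division statement. Write $\sigma_1,\sigma_2$ for the single--variable symbols of $T^{(1)}_{\la l},T^{(2)}_{\la l}$, so by \eqref{operator.T} (using $|l|=1$) we have $\sigma_1(\xi)=\tfrac\la2\phi_2(\xi/\la)$ and $\sigma_2(\xi)=\tfrac{i\la}2\tilde\phi_3(\xi/\la)$, where $\phi_2(z)=|l+z|+|l-z|-2$ and $\tilde\phi_3(z)=|l+z|-|l-z|-2\,l\cdot z$. Two elementary facts drive everything: $\phi_2$ is \emph{even} and vanishes to order $2$ at the origin, while $\tilde\phi_3$ is \emph{odd} and vanishes to order $3$. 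After cutting $\phi_2,\tilde\phi_3$ off to $\{|z|\le\tfrac12\}$ (harmless, since $|\xi|\le\mu$ on $\supp\widehat{b_0}$ forces $|\xi/\la|\le\tfrac12$), these give the symbol bounds $|\pa^\alpha_\xi\sigma_1(\xi)|\lesssim\mu^{2-|\alpha|}\la^{-1}$ and $|\pa^\alpha_\xi\sigma_2(\xi)|\lesssim\mu^{3-|\alpha|}\la^{-2}$ for every multi--index $\alpha$ on $\{|\xi|\le\mu\}$. In the notation of \eqref{fact.0.02}, $b_0T^{(2)}_{\la l}b_0=T_m(b_0,b_0)$ with $m(\xi,\eta)=\sigma_2(\eta)$, and $(T^{(1)}_{\la l}b_0)\pa_{x_a}b_0=T_m(b_0,b_0)$ with $m(\xi,\eta)=i\eta_a\sigma_1(\xi)$ for $a=1,2$.

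Since $\widehat{b_0}$ is supported in $\{|k|\le\mu\}$ I may multiply each symbol by $\chi(\xi/\mu)\chi(\eta/\mu)$ for a fixed bump $\chi\equiv1$ on the unit ball, and since both slots carry $b_0$ I may symmetrize in $\xi\leftrightarrow\eta$. The crux is that the symmetrized, truncated symbol $G(\xi,\eta)=\tfrac12\chi(\xi/\mu)\chi(\eta/\mu)\big(m(\xi,\eta)+m(\eta,\xi)\big)$ \emph{vanishes on the anti--diagonal} $\{\xi+\eta=0\}$: for $m=\sigma_2(\eta)$ because $\sigma_2(-\eta)+\sigma_2(\eta)=0$ by oddness, and for $m=i\eta_a\sigma_1(\xi)$ because $\tfrac i2\eta_a(\sigma_1(-\eta)-\sigma_1(\eta))=0$ by evenness. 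This parity cancellation is really the whole content of the lemma — it is why all three products have vanishing mean, and without the symmetrization $G$ would \emph{not} vanish on $\{\xi+\eta=0\}$, so no divergence representation would exist. From the symbol bounds above, $G$ is a smooth function supported in $\{|\xi|,|\eta|\lesssim\mu\}$ which behaves like a symbol of size $\mu^3\la^{-2}$ in the $\sigma_2$--case and $\mu^3\la^{-1}$ in the $\sigma_1$--cases, every derivative gaining a factor $\mu^{-1}$.

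Now the division: a smooth $G$ on $\R^2\times\R^2$, supported in $\{|\xi|,|\eta|\lesssim\mu\}$ and vanishing on $\{\xi+\eta=0\}$, can be written $G=\sum_{j=1}^2(\xi_j+\eta_j)m_j$. In the coordinates $k=\xi+\eta$, $p=\xi-\eta$, Taylor's formula gives $G=\sum_j k_j\int_0^1(\pa_{k_j}G)(tk,p)\,dt$; multiplying by a cutoff $\rho(k)\equiv1$ on the compact $k$--support of $G$ makes the factors compactly supported in $k$ as well, so each $m_j$ is supported in $\{|k|,|p|\lesssim\mu\}$ and, by the symbol calculus above ($\pa_{k_j}$ costs $\mu^{-1}$, $\rho$ is a symbol at scale $\mu$), obeys $|\pa^\alpha_\xi\pa^\beta_\eta m_j|\lesssim\mu^{2-|\alpha|-|\beta|}\la^{-2}$ (resp. $\mu^{2-|\alpha|-|\beta|}\la^{-1}$). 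Being compactly supported at scale $\mu$, rescaling $(\xi,\eta)\mapsto\mu(\xi,\eta)$ to unit scale then yields $\|\cF^{-1}m_j\|_{L^1(\R^4)}\lesssim\mu^2\la^{-2}$ (resp. $\mu^2\la^{-1}$). Finally, $\sum_j(\xi_j+\eta_j)(\cdot)$ on the symbol side is $-i\sum_j\pa_{x_j}$ on the output side, so $b_0T^{(2)}_{\la l}b_0=T_G(b_0,b_0)=-i\sum_j\pa_{x_j}T_{m_j}(b_0,b_0)=\tfrac{\mu^2}{\la^2}\sum_j\pa_{x_j}T_{\,-i\la^2\mu^{-2}m_j}(b_0,b_0)$, and similarly $(T^{(1)}_{\la l}b_0)\pa_{x_a}b_0=\tfrac{\mu^2}{\la}\sum_j\pa_{x_j}T_{\,-i\la\mu^{-2}m_j}(b_0,b_0)$; taking $K_j$ to be the inverse transform of the rescaled symbol in each term and relabelling the six kernels as $K_1,\dots,K_6$ gives the asserted identities with $\|K_j\|_{L^1}\lesssim1$.

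The step I expect to be the main obstacle is this last one — the quantitative division with $L^1$ control of the kernels. Naive Taylor division in $k$ produces an $m_j$ decaying only like $|k|^{-1}$, so one must reinstate a frequency cutoff at scale $\mu$ \emph{after} dividing and then verify that it costs only an absolute constant in $\|\cF^{-1}m_j\|_{L^1(\R^4)}$; this is the same two--scale bookkeeping (with $\mu\le\tfrac12\la$ kept straight) already carried out in Lemma~\ref{L_00a}, and it is precisely where the vanishing to orders $2,3$ of $\phi_2,\tilde\phi_3$ is essential, guaranteeing that $G$ genuinely behaves like a size--$\mu^3\la^{-2}$ (resp. size--$\mu^3\la^{-1}$) symbol. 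By contrast the conceptual heart — symmetrize, then read the anti--diagonal vanishing off the parity of $\phi_2,\tilde\phi_3$ — is immediate once the setup is in place.
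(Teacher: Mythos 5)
Your proof is correct and follows essentially the same route as the paper's: symmetrize the bilinear symbol in $\xi\leftrightarrow\eta$, use the parity of $\phi_2$ and $\tilde\phi_3$ to get vanishing on $\{\xi+\eta=0\}$, extract a factor of $\xi+\eta$ by Taylor/FTC division, and control the resulting kernels in $L^1$ using the order of vanishing at the origin together with the frequency cutoffs at scale $\mu$. The paper merely phrases the division as an explicit $\int_0^1(\nabla\phi)(\lambda^{-1}(k'-\theta k))\,d\theta$ with the $\mu$-scale cutoffs inserted before differentiating, which is the same two-scale bookkeeping you describe.
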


\begin{proof}
Observe that for \smb{$|z|\le \frac 23$}, \smb{$\phi(z) =|l+z| -|l-z| -2 l \cdot z =\sum_{i,j,k=1}^2
h_{ijk}(z) z_i z_j z_k$} for some \smb{$h_{ijk} \in C^{\infty}$}. 
Choose \smb{$\phi_1 \in C_c^{\infty}(\mathbb R^2)$} such that 
\smb{$\phi_1(\xi) \equiv 1$} for 
\smb{$|\xi| \le 1$} and \smb{$\phi_1(\xi) \equiv 0$}
for \smb{$|\xi|\ge 1.1$}.  By using parity of $\phi$, we have
\begin{align*}
&\text{\smb{$
\widehat{b_0 T_{\lambda l}^{(2)} b_0}
(k)  = \frac i 4 \lambda  \sum_{k^{\prime} \in \mathbb Z^2}
( \phi(\lambda^{-1} k^{\prime}) - \phi( \lambda^{-1} (k^{\prime}-k) ))
\widehat {b_0} (k^{\prime} ) \widehat{b_0}(k-k^{\prime} ) $}}\notag \\
&\text{\smb{$
 =-\frac i 4 \sum_{k^{\prime} \in \mathbb Z^2}
\int_0^1 k \cdot (\nabla \phi)( \lambda^{-1}(k^{\prime} -\theta k) )
d\theta \phi_1(\mu^{-1} k^{\prime})  \phi_1(\mu^{-1}(k-k^{\prime}) )
\widehat{b_0}(k^{\prime}) \widehat{b_0}(k-k^{\prime}).$}}
\end{align*}
Note that \smb{$ (\nabla \phi) (\frac {k^{\prime}-\theta k} {\lambda} )
\phi_1(\frac{ k^{\prime}} {\mu} ) \phi_1(\frac {k-k^{\prime}} {\mu} )
=\lambda^{-2} \sum_{1\le i,j\le 2} \tilde h_{ij}(\frac{k^{\prime}-\theta k}{\lambda} )
(k^{\prime}-\theta k)_i (k^{\prime}-\theta k)_j
\phi_1(\frac{ k^{\prime}} {\mu} ) \phi_1(\frac {k-k^{\prime}} {\mu} )$
} where \smb{$\tilde h_{ij}
\in C^{\infty}_c (\mathbb R^2)$}. 
The result then follows from \eqref{fact.0.02} by checking the $L^1$ bound of the kernel.
The case for \smb{$T_{\lambda l}^{(1)}$} is similar.
\end{proof}

\begin{lemm}\label{comm.est}
Define $T_{n+1,j}^{(i)}=T_{5\la_{n+1}l_j}^{(i)}$, $a_j = a_{j,n+1}$, and $l_j$, $i,j=1,2$, as in \eqref{operator.T} and \eqref{defn.f}. Suppose that the assumptions on $(f_\len, q_n)$ in Proposition \ref{const.qn} hold. 
Then, we have
\EQN{
\norm{\De^{-1}\na \cdot ( (T_{n+1,j}^{(1)}a_j) \na^\perp a_j )}_X
+\norm{\De^{-1}\na \cdot (5\la_{n+1} (T_{n+1,j}^{(2)}a_j) a_j l^\perp_j)}_X
\lesssim  r_n \lambda_{n+1}^{-2} \mu_{n+1}^2 \log \mu_{n+1}.
}
\end{lemm}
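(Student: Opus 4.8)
The plan is to reduce the two quantities to products of the form $\partial_{x_i} T_{m_i}(b_0, b_0)$ by invoking Lemma \ref{L_00b} with $b_0 = a_{j,n+1}$ and $\lambda = 5\lambda_{n+1}$, and then to estimate $\Delta^{-1}\nabla$ of each such product by peeling off the free derivative against $\Delta^{-1}\nabla$ (which is a zeroth-order Riesz-type operator) and applying the bilinear kernel bound \eqref{fact.0.02} together with Lemma \ref{L_m90}. Concretely, recall from \eqref{defn.f_new} and Proposition \ref{matching} that $a_j = P_{\le \mu_{n+1}} a_{j,n+1}^{\operatorname{perfect}}$ has frequency support in $\{|k|\le \mu_{n+1}\}$, and from $\|q_n\|_X \le r_n$ one gets $\|a_{j,n+1}^{\operatorname{perfect}}\|_\infty \lesssim \sqrt{r_n/\lambda_{n+1}}$ (using $C_0$ fixed and $\|\mathcal R_j^o q_n/r_n\|_\infty \le 1$), hence $\|a_j\|_\infty \lesssim \sqrt{r_n/\lambda_{n+1}}$ and, via Bernstein, $\|\nabla a_j\|_\infty \lesssim \mu_{n+1}\sqrt{r_n/\lambda_{n+1}}$. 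Also note $\mu_{n+1} = (\lambda_{n+1}\lambda_n)^{1/2} \le \tfrac12 \cdot 5\lambda_{n+1}$ for $\lambda_0$ large, so the hypotheses of Lemma \ref{L_00b} are met with $\mu = \mu_{n+1}$, $\lambda = 5\lambda_{n+1}$.

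For the first term, Lemma \ref{L_00b} gives $(T_{n+1,j}^{(1)}a_j)\,\partial_{x_k} a_j = \frac{\mu_{n+1}^2}{5\lambda_{n+1}}\sum_i \partial_{x_i} T_{m_i}(a_j, a_j)$ with $\|K_i\|_{L^1(\mathbb R^4)}\lesssim 1$, so
\EQN{
\norm{\Delta^{-1}\nabla \cdot ((T_{n+1,j}^{(1)}a_j)\nabla^\perp a_j)}_X
\lesssim \frac{\mu_{n+1}^2}{\lambda_{n+1}} \sum_i \norm{\Delta^{-1}\nabla\partial_{x_i} T_{m_i}(a_j,a_j)}_X.
}
Since the $X$-norm is built from $L^\infty$ and Riesz-type operators, Lemma \ref{L_m90} (applied at frequency scale $\sim\mu_{n+1}$, which dominates the frequency support of the bilinear output) costs a factor $\log\mu_{n+1}$, and $\Delta^{-1}\nabla\partial_{x_i}$ is order zero; by \eqref{fact.0.02} one then gets $\norm{T_{m_i}(a_j,a_j)}_\infty \lesssim \|a_j\|_\infty^2 \lesssim r_n/\lambda_{n+1}$. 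Collecting, the first term is $\lesssim \frac{\mu_{n+1}^2}{\lambda_{n+1}}\cdot \frac{r_n}{\lambda_{n+1}}\log\mu_{n+1} = r_n\lambda_{n+1}^{-2}\mu_{n+1}^2\log\mu_{n+1}$, as claimed.

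For the second term, Lemma \ref{L_00b} gives $a_j T_{n+1,j}^{(2)}a_j = \frac{\mu_{n+1}^2}{(5\lambda_{n+1})^2}\sum_i \partial_{x_i}T_{m_i}(a_j,a_j)$; multiplying by $5\lambda_{n+1} l_j^\perp$ produces exactly $\frac{\mu_{n+1}^2}{5\lambda_{n+1}}$ times a sum of $\partial_{x_i}T_{m_i}(a_j,a_j)$, and the same estimate as above applies, giving the bound $\lesssim r_n\lambda_{n+1}^{-2}\mu_{n+1}^2\log\mu_{n+1}$. Adding the two contributions yields the claim. The main point requiring care — and the step I expect to be the real obstacle — is the logarithmic loss: one must verify that the bilinear output $T_{m_i}(a_j,a_j)$ has frequency support in a ball of radius $O(\mu_{n+1})$ (it does, since both inputs are supported in $\{|k|\le\mu_{n+1}\}$), so that Lemma \ref{L_m90} can be applied with $\mu = \mu_{n+1}$ and the Riesz-type operators hidden inside $\|\cdot\|_X$ cost only $\log\mu_{n+1}$ rather than a power of $\lambda_{n+1}$; everything else is bookkeeping with the $L^1$ kernel bounds from \eqref{fact.0.01}–\eqref{fact.0.02}.
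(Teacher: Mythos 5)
Your proposal is correct and follows essentially the same route as the paper: the paper's proof likewise invokes Lemma \ref{L_00b} to extract a derivative (absorbed into $\Delta^{-1}\nabla$ as a zeroth-order Riesz-type operator) together with the factor $\mu_{n+1}^2/\lambda_{n+1}$, and then applies Lemma \ref{L_m90} and the $L^1$ kernel bounds to get $\|a_j\|_\infty^2\lesssim r_n/\lambda_{n+1}$ times $\log\mu_{n+1}$. Your bookkeeping of the powers of $\lambda_{n+1}$ and $\mu_{n+1}$ for both terms matches the paper's computation.
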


\begin{proof}We only treat the second term as the others are similar.
By Lemma \ref{L_00b} and \ref{L_m90}, we have
\begin{align*}
\norm{\De^{-1}\na \cdot (5\la_{n+1} (T_{n+1,j}^{(2)}a_j) a_j l^\perp_j)}_X
\lesssim (\log \mu_{n+1}) \lambda_{n+1}
(\frac {\mu_{n+1} }{\lambda_{n+1} })^2
\frac {r_n}{\lambda_{n+1}} 
\lesssim r_n (\frac {\mu_{n+1} }{\lambda_{n+1} })^2 \log \mu_{n+1}.
\end{align*}

\end{proof}

\begin{lemm}[Estimate of $q_{M1}$]   \label{Ap60.1}
We have $\|q_{M1} \|_X \lesssim (\log \mu_{n+1})
(\mu_{n+1}^{-1} \lambda_n)^2 r_n$. 
\end{lemm}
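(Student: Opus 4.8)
The plan is to use the algebraic identity behind Proposition~\ref{matching} to rewrite $q_{M1}$ in a form in which only the frequency truncation $a_{j,n+1}=P_{\le\mu_{n+1}}a_{j,n+1}^{\operatorname{perfect}}$ enters, and then to estimate that residue using the smoothness of the square root together with the logarithmic Riesz bound of Lemma~\ref{L_m90}. First, recall that $q_{M1}\in C_0^\infty(\T)$ is the solution of $\bmt{main}+\na q_n\simp\na q_{M1}$ (cf.\ \eqref{qM1}); since the divergence-free part of a mean-zero vector field on $\T$ is a $\na^\perp$, this forces $q_{M1}=\De^{-1}\na\cdot\bmt{main}+q_n=-\tfrac{5}{4}\la_{n+1}\sum_{j=1}^2\mathcal S_j(a_{j,n+1}^2)+q_n$, where $\mathcal S_j:=\De^{-1}(l_j^\perp\cdot\na)(l_j\cdot\na)$ is a zeroth-order Riesz-type multiplier with symbol $|k|^{-2}(l_j^\perp\cdot k)(l_j\cdot k)$. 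Applying $\De^{-1}\na\cdot$ to the identity \eqref{mismatch.pr} of Proposition~\ref{matching} gives $-\tfrac{5}{4}\la_{n+1}\sum_j\mathcal S_j\big((a_{j,n+1}^{\operatorname{perfect}})^2\big)+q_n=0$; subtracting, the $q_n$'s cancel and
\[
q_{M1}=-\tfrac{5}{4}\la_{n+1}\sum_{j=1}^2\mathcal S_j G_j,\qquad
G_j:=a_{j,n+1}^2-(a_{j,n+1}^{\operatorname{perfect}})^2=-\big(P_{>\mu_{n+1}}a_{j,n+1}^{\operatorname{perfect}}\big)\big(a_{j,n+1}+a_{j,n+1}^{\operatorname{perfect}}\big),
\]
so that only the truncation to frequencies $\le\mu_{n+1}$ is responsible for $q_{M1}$, which is the source of its smallness.

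Next I would control $G_j$. Write $a^{\mathrm p}:=a_{j,n+1}^{\operatorname{perfect}}=2\sqrt{r_n/(5\la_{n+1})}\,\sqrt{C_0+w}$ with $w:=\cR_j^o q_n/r_n$, so by hypothesis $\|w\|_\infty\le1$ and $\supp\widehat w\subset\{|k|\le12\la_n\}$. Bernstein's inequality gives $\|\na^m w\|_\infty\lesssim_m\la_n^m$; since $C_0\ge2$, $C_0+w$ takes values in $[1,\infty)$, where $t\mapsto\sqrt t$ is smooth with bounded derivatives, so the chain rule yields $\|\na^m a^{\mathrm p}\|_\infty\lesssim_m\sqrt{r_n/\la_{n+1}}\,\la_n^m$ for every $m$. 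The standard vanishing-moment bound $\|P_{>\mu}g\|_\infty\lesssim_m\mu^{-m}\|\na^m g\|_\infty$ (valid since all Littlewood--Paley pieces past $\mu_{n+1}$ are annular) then gives, taking $m=2$,
\[
\|P_{>\mu_{n+1}}a^{\mathrm p}\|_\infty\lesssim\sqrt{\tfrac{r_n}{\la_{n+1}}}\Big(\tfrac{\la_n}{\mu_{n+1}}\Big)^2,\qquad
\|G_j\|_\infty\lesssim\|P_{>\mu_{n+1}}a^{\mathrm p}\|_\infty\,\|a^{\mathrm p}\|_\infty\lesssim\tfrac{r_n}{\la_{n+1}}\Big(\tfrac{\la_n}{\mu_{n+1}}\Big)^2,
\]
using also $\|a_{j,n+1}\|_\infty\lesssim\|a^{\mathrm p}\|_\infty\lesssim\sqrt{r_n/\la_{n+1}}$. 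Moreover $G_j$ is frequency-localized: $a_{j,n+1}^2$ is supported in $\{|k|\le2\mu_{n+1}\}$, while by \eqref{defn.f} the square of the perfect amplitude equals $(a^{\mathrm p})^2=\tfrac{4r_n}{5\la_{n+1}}\big(C_0+\cR_j^o q_n/r_n\big)$, supported in $\{|k|\le12\la_n\}$; since $\la_n\le\mu_{n+1}$, both lie in $\{|k|\le12\mu_{n+1}\}$, hence $\supp\widehat{G_j}\subset\{|k|\le12\mu_{n+1}\}$.

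To conclude, note that $\mathcal S_j$ and the compositions $\cR_i^o\mathcal S_j$ ($i=1,2$) are again Riesz-type operators (degree-zero symbols, smooth away from the origin), so by Lemma~\ref{L_m90} applied to the frequency-localized $G_j$ we get $\|\mathcal S_j G_j\|_\infty+\sum_{i=1}^2\|\cR_i^o\mathcal S_j G_j\|_\infty\lesssim\|G_j\|_\infty\log\mu_{n+1}$. Combining this with the formula of the first paragraph and the bound of the second,
\[
\|q_{M1}\|_X\lesssim\la_{n+1}\,\|G_j\|_\infty\log\mu_{n+1}\lesssim r_n\Big(\tfrac{\la_n}{\mu_{n+1}}\Big)^2\log\mu_{n+1}=(\log\mu_{n+1})(\mu_{n+1}^{-1}\la_n)^2 r_n,
\]
which is the desired estimate.

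The main obstacle is the middle step: one must show that a smooth function (here $\sqrt{C_0+\,\cdot\,}$) of a function bandlimited to frequencies $\lesssim\la_n$ is, up to a rapidly decaying tail, essentially bandlimited to $\lesssim\la_n$, with the quantitative gain $(\la_n/\mu_{n+1})^2$ for the portion above frequency $\mu_{n+1}$. Everything hinges on the $C^m$-estimates for $a^{\mathrm p}$ obtained from Bernstein and the chain rule; once those are in hand the rest is routine, and in fact one obtains the stronger tail bound $(\la_n/\mu_{n+1})^m$ for every $m$, so the exponent $2$ in the statement is far from sharp. The algebraic reduction of the first paragraph and the Riesz estimate of the third are then just bookkeeping on top of Proposition~\ref{matching} and Lemma~\ref{L_m90}.
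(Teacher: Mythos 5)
Your proof is correct and follows essentially the same route as the paper's: reduce $q_{M1}$ via Proposition \ref{matching} to the high-frequency residue $a_{j,n+1}^2-(a_{j,n+1}^{\operatorname{perfect}})^2\simeq -\big(P_{>\mu_{n+1}}a^{\mathrm p}\big)\big(a+a^{\mathrm p}\big)$, bound the tail $\|P_{>\mu_{n+1}}a^{\mathrm p}\|_\infty$ by Bernstein plus the chain rule for $\sqrt{C_0+\cdot}$, and apply Lemma \ref{L_m90} to the zeroth-order multipliers. The only difference is cosmetic: you spell out the tail estimate $\|P_{>\mu_{n+1}}a^{\mathrm p}\|_\infty\lesssim\sqrt{r_n/\la_{n+1}}(\la_n/\mu_{n+1})^2$, which the paper leaves implicit, and you bypass the fattened projector $\tilde P_{\le\mu_{n+1}}$ by invoking the uniqueness (up to constants) of the gradient part in the Helmholtz decomposition.
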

\begin{proof}
To ease the notation we write $a_j^p=2 \sqrt{\frac{r_n}{\lambda_{n+1}}}
 \sqrt{ C_0 +  \mathcal R_j^o \frac {q_n} {r_n} }$ and $a_j = P_{\le \mu_{n+1}}
 a_j^p$. By using a fattened frequency projection $\tilde P_{\le \mu_{n+1} }$ which
is frequency localized to $\{ |k| \le 4\mu_{n+1} \}$, 
 we have 
 \begin{align*}
& -\frac 14 \cdot (5\lambda_{n+1}) \cdot
  \sum_{j=1}^2 l_j^{\perp} (l_j\cdot  \nabla ) a_j^2 + \nabla q_n - \nabla q_{M1} \notag \\
 = & - \frac 54  \lambda_{n+1} \sum_{j=1}^2 l_j^{\perp} (l_j \cdot \nabla)
  \tilde P_{\le \mu_{n+1}} (  ( P_{\le \mu_{n+1} } a_j^p )^2 ) 
  + \nabla q_n - \nabla  q_{M1} \notag \\
  =& - \frac 54  \lambda_{n+1} \sum_{j=1}^2 l_j^{\perp} (l_j \cdot \nabla)
  \tilde P_{\le \mu_{n+1}} \biggl(   -2 a_j^p P_{>\mu_{n+1} } a_j^p 
  + (P_{>\mu_{n+1} } a_j^p)^2 \biggr)  - \nabla  q_{M1}\simp 0. 
  \end{align*}
  Thus we can solve $q_{M1} \in C_0^{\infty}(\mathbb T^2)$ as 
  \begin{align}\label{qM1}
q_{M1} = -\frac 54 \lambda_{n+1} \sum_{j=1}^2
\Delta^{-1} \nabla \cdot \Bigl( l_j^{\perp} (l_j \cdot \nabla)
  \tilde P_{\le \mu_{n+1}} \biggl(   -2 a_j^p P_{>\mu_{n+1} } a_j^p 
  + (P_{>\mu_{n+1} } a_j^p)^2 \biggr)  \Bigr).
  \end{align}
  Note that $q_{M1}$ is frequency localized to $\{|k| \le 4\mu_{n+1}\}$. 
  By Lemma \ref{L_m90}, we obtain
\begin{align*}
\| q_{M1} \|_X
&\lesssim \log \mu_{n+1} \cdot \lambda_{n+1}
\sum_{j=1}^2 \| a_j^p \|_{\infty} \| P_{>{\mu_{n+1} }} a_j^p\|_{\infty} \lesssim 
 \log \mu_{n+1} \cdot (\mu_{n+1}^{-1} \lambda_n)^2 r_n.
\end{align*}

\end{proof}

\section{Some technical estimates}

\begin{prop} \label{prop_smoot1}
Let $\mathcal R=\mathcal R_j$, $j=1,2$. Assume $\phi \in H^3$ and $\theta \in \dot H^{-\frac 12}$ ($\overline \theta=0$). Then we have
\begin{align*}
\| [\mathcal R, \phi ] \theta \|_{\dot H^{\frac 12} } \lesssim 
\|  \phi\|_{\dot H^3}
\| \theta \|_{\dot H^{-\frac 12} }.
\end{align*}
\end{prop}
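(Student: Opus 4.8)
The plan is to estimate the commutator $[\mathcal R, \phi]\theta$ directly on the Fourier side. Writing $\widehat{\mathcal R}(k) = \omega(k/|k|)$ for the (zero-homogeneous) symbol of $\mathcal R_j$, we have
\[
\widehat{[\mathcal R,\phi]\theta}(k) = \sum_{\ell \neq 0} \bigl(\omega(k/|k|) - \omega(\ell/|\ell|)\bigr)\,\widehat{\phi}(k-\ell)\,\widehat{\theta}(\ell),
\]
so the task reduces to controlling the multiplier difference $\omega(k/|k|)-\omega(\ell/|\ell|)$. Since $\omega$ is smooth on the unit circle, $|\omega(k/|k|)-\omega(\ell/|\ell|)| \lesssim |k/|k| - \ell/|\ell||\lesssim |k-\ell|/\min(|k|,|\ell|)$. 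The point of the commutator is precisely that this gains a factor of $\min(|k|,|\ell|)^{-1}$ relative to the naive bound, which is what converts the $\dot H^{-1/2}$ input norm into a $\dot H^{1/2}$ output norm.

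First I would reduce to an $\ell^2$-summability statement: $\|[\mathcal R,\phi]\theta\|_{\dot H^{1/2}}^2 = \sum_{k\neq 0}|k|\,|\widehat{[\mathcal R,\phi]\theta}(k)|^2$, and plug in the bound above together with $|k-\ell|^s$-type weights coming from $\widehat\phi$. It is cleanest to split into the usual paraproduct regions. In the region $|\ell| \gtrsim |k|$ (high input frequency, lower output frequency), one has $|k| \lesssim |\ell|$ and $|k-\ell|\lesssim |\ell|$, so $|k|^{1/2}\cdot \frac{|k-\ell|}{|\ell|} \lesssim |k-\ell|^{1/2}\cdot |k-\ell|^{1/2}/|\ell|^{1/2}\cdot\ldots$; more simply, writing $|k|^{1/2}|\omega(k/|k|)-\omega(\ell/|\ell|)| \lesssim |k|^{1/2}|k-\ell|/|\ell| \le |k-\ell|^{3/2}\cdot |\ell|^{-1/2}$ absorbs exactly the right powers so that $\widehat\phi(k-\ell)$ is tested against $|k-\ell|^3$ (giving $\|\phi\|_{\dot H^3}$ after Cauchy--Schwarz in $k-\ell$) and $\widehat\theta(\ell)$ against $|\ell|^{-1}$ (giving $\|\theta\|_{\dot H^{-1/2}}$). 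In the region $|\ell|\lesssim |k|$, necessarily $|k-\ell|\gtrsim |k|$, and $|k|^{1/2}|\omega(k/|k|)-\omega(\ell/|\ell|)|\lesssim |k|^{1/2} \lesssim |k-\ell|^{1/2}$, again leaving enough room (in fact more than enough) for $|k-\ell|^3$ and $|\ell|^{-1}$. In each region, after extracting these powers, one applies Cauchy--Schwarz / Young's inequality for the convolution in $\ell$ to conclude $\|[\mathcal R,\phi]\theta\|_{\dot H^{1/2}} \lesssim \|\phi\|_{\dot H^3}\|\theta\|_{\dot H^{-1/2}}$. (One uses here that $H^3(\mathbb T^2)$ embeds in an algebra and, crucially, $\dot H^2(\mathbb T^2)\hookrightarrow L^\infty$ fails only logarithmically — but since we have $\dot H^3$ to spend, $\sum_{m\neq0}|m|^{-1}\cdot|m|^{?}$-type sums converge with room to spare; the relevant summable tail is $\sum |m|^{-2}<\infty$ or similar, which holds in dimension two.)

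The step I expect to be the main obstacle is bookkeeping the borderline case where $|k|$, $|\ell|$, and $|k-\ell|$ are all comparable (the "diagonal" of the paraproduct), since there the gain $\min(|k|,|\ell|)^{-1}$ from the commutator is exactly balanced against the loss $|k|^{1/2}$ from the output norm and the $|\ell|^{-1/2}$ from the input norm, leaving a net multiplier of size $\sim 1$ against $\widehat\phi(k-\ell)$ with no extra decay — so one must genuinely use that $\phi$ has three derivatives (rather than, say, two) to get a convergent sum in two dimensions. Concretely, in that regime $\sum_{\ell}|\widehat\phi(k-\ell)|\,|\widehat\theta(\ell)|\,|k|^{1/2}$ must be controlled, and one writes $|k|^{1/2} \le |\ell|^{1/2}$ up to constants, absorbs a full power $|\ell|^{-1/2}$ from $\widehat\theta$'s weight, and is left needing $\sum_{\ell}|\widehat\phi(k-\ell)| \lesssim \|\phi\|_{\dot H^3}$, which is Cauchy--Schwarz using $\sum_{m\neq 0}|m|^{-6}<\infty$. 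Once the diagonal is handled this way, the off-diagonal pieces are strictly easier and follow by the same Young's-inequality argument.
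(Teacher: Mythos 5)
Your argument is correct and is essentially the paper's own proof: both work on the Fourier side, split according to the relative sizes of $|k|$, $|\ell|$, $|k-\ell|$, use the Lipschitz bound $|\omega(k/|k|)-\omega(\ell/|\ell|)|\lesssim |k-\ell|/\max(|k|,|\ell|)$ near the diagonal and the trivial symbol bound elsewhere, and close with Young's/Cauchy--Schwarz using that $|m|^{3/2}\widehat{\phi}(m)\in \ell^1(\mathbb{Z}^2)$ when $\phi\in H^3(\mathbb{T}^2)$. Two harmless slips: the unit-vector difference bound has $\max$, not $\min$, in the denominator (and the $\max$ version is what you actually invoke in the region $|\ell|\gtrsim|k|$), and in the diagonal regime the correct leftover requirement is $\sum_m |m|\,|\widehat{\phi}(m)|<\infty$ rather than $\sum_m|\widehat{\phi}(m)|<\infty$ --- both still follow from $\phi\in H^3$ in two dimensions, so the conclusion is unaffected.
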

\begin{proof}
Denote $m(k)=\frac {k_1}{|k|}$. 
 It suffices to show that
\begin{align} \label{te_456}
\| \sum_{k^{\prime} \ne 0,k} |k|^{\frac 12}
 (m(k) -m(k^{\prime})) \widehat{\phi}(k-k^{\prime})
\widehat{\theta}(k^{\prime}) \|_{l_k^2} \lesssim 
\| |k|^3 \widehat {\phi}(k) \|_{l_k^2} \| |k|^{-\frac 12} \widehat {\theta}(k) \|_{l_k^2}.
\end{align}
If $|k^{\prime}| \lesssim |k-k^{\prime}|$, then $|k| \lesssim |k-k^{\prime}|$, and
\begin{align*}
\text{LHS of \eqref{te_456}}
\lesssim \| \sum_{k^{\prime} \ne 0, k}
|k-k^{\prime}| |\widehat{\phi} (k-k^{\prime})| \cdot |k^{\prime}|^{-\frac 12}
|\widehat{\th}(k^{\prime})|
\|_{l_k^2} \lesssim  \text{RHS of \eqref{te_456}}.
\end{align*}
If $|k-k^{\prime}|\ll |k|$, then $|k| \sim |k^{\prime}|$, and 
it suffices to use $|m(k)-m(k^{\prime})| \lesssim |k-k^{\prime}| 
(|k^{\prime}|+|k|)^{-1}$.

\end{proof}

%
\begin{proof}[Proof of Theorem \ref{thm2}]
The point is to use the weak formulation (below $\langle, \rangle $ denotes $L^2$-inner
product in $(t,x)$, and $\psi$ is a time-dependent test function)
$$\langle \partial_t \theta_n, \psi \rangle + \frac 12 \langle 
\La^{-\frac 12} \theta_n, \La^{\frac 12} [\mathcal R^{\perp}, \nabla \psi ] \theta_n \rangle
+ \nu \langle \La^{-\frac 12} \theta_n ,\La^{\gamma+\frac 12} \psi \rangle =0.
$$
By using the above together with Proposition \ref{prop_smoot1}, we have\footnote{Here 
$t$ belongs to an arbitrary compact interval.}
 $\| \partial_t \theta_n \|_{L_t^1 \dot H^{-8}} \lesssim 1$. Fix any $0\ne k \in \mathbb Z^2$. 
We have \smb{$\| \partial_t \widehat{\theta_n}(k,t) \|_{L_t^1} \lesssim |k|^8$} and 
\smb{$\|\widehat{\theta_n}(k, t)\|_{L_t^2} \lesssim |k|^{-s} $} which implies that for a subsequence
(and using a diagonal argument) \smb{$ \| \widehat{\theta_{n_l}}(k,t) -\widehat f(k,t) \|_{L_t^2}
\to 0$} for any fixed $k$. Using $\theta_n \in L_t^2 \dot H^s$,  one obtains $\theta_{n_l} \to f$ in $L_t^2
\dot H^{-\frac 12}$.  Since \smb{$\|\La^{\frac 12} [\mathcal R^{\perp}, \nabla \psi ] (\theta_n
-f) \|_2 \lesssim \| \theta_n -f \|_{\dot H^{-\frac 12}}$}, $f$ is clearly the desired weak
solution. 
\end{proof}

\section{Bookkeeping of various parameters}\label{S:appC}
In this appendix we sketch how the choice of various parameters in \eqref{par} take effect
on various error terms and the regularity of the weak solution. 
Recall that (observe from below $\log \mu_{n+1} \sim \log \lambda_n$)
\begin{align}\notag
\la_n=\myceil{\la_0^{b^n}},\quad r_n=\la_n^{-\beta}, 
 \quad \mu_{n+1}=(\lambda_n \lambda_{n+1})^{\frac 12},  \quad 
 \text{\smb{$\al = \frac 12 + \frac {\beta}{2b} -\epsilon_0>\frac12$}}. 
\end{align}

\texttt{Mismatch error} \qquad  \smb{$ r_n 
\frac {\lambda_n} {\lambda_{n+1} }  \log\la_n \ll r_{n+1}\iff
\la_n^{(b-1)(\be-1)}\log \la_n\ll 1. $}

\texttt{Transport error} \quad\  \smb{$\la_n^{1-\al}\sqrt{\frac{r_n}{\la_{n+1}}}\ll r_{n+1}\iff \la_n^{1-\alpha-\frac 12 \beta-\frac 12 b+b\beta}\ll1.$}

\texttt{Dissipation error} \  \smb{$\la_{n+1}^{\ga-1}\sqrt{\frac{r_n}{\la_{n+1}}}\ll r_{n+1}\iff \la_{n+1}^{\gamma-\frac 32+\beta
-\frac{\beta}{2b} }\ll 1.$}

\texttt{$C^\al$-regularity}\qquad\  \smb{$\la_{n+1}^\al\sqrt{\frac{r_n}{\la_{n+1}}}\ll 1 \iff \la_{n+1}^{\alpha-\frac 12-\frac 1{2b} \beta}\ll 1.$}

Now one can take $\alpha=\frac 12 +\frac {\beta}{2b} $ to do a limiting computation.
From the transport error we obtain (the limiting condition)
\begin{align*}
1-\alpha-\frac 12 \beta-\frac 12 b +b \beta=\frac {1-b}{2b}
(b-\beta(2b+1)) \Rightarrow \beta<\frac 13.
\end{align*}
From the dissipation error we obtain $\frac {\beta}2 <\frac 32 -\gamma$. 

\section*{Acknowledgments}
X. Cheng was supported by the International Doctoral Fellowship (IDF) from the University of British Columbia, Canada. H. Kwon was partially supported by NSERC grant 261356-13 (Canada) and the NSF grant No.DMS-1638352. D. Li was supported in part by Hong Kong RGC grant GRF 16307317
and 16309518. 

\bibliographystyle{abbrv}

\end{document}